\documentclass[11pt]{article}
\usepackage{amssymb, amsmath, amsfonts, tikz,  ytableau}
\usepackage{graphicx}
\usepackage{subfigure}
\usepackage{amsthm}
\usepackage[indent,margin=1cm]{caption}                 
\usepackage{float}
\usepackage[a4paper, margin=1in]{geometry}                   \usepackage{extpfeil}
\usepackage[colorlinks,linkcolor=blue,anchorcolor=blue,citecolor=blue]{hyperref}
\usepackage{diagbox}
\usepackage{pgfplots}
\usepackage{mathrsfs}
\usepackage{ytableau}
\usetikzlibrary{calc,through,backgrounds}
\usepackage{multicol}
\usepackage{enumitem}
\usetikzlibrary{arrows}

\usetikzlibrary{arrows.meta}
\usetikzlibrary{positioning, calc, decorations.pathreplacing}

\numberwithin{equation}{section}
\numberwithin{figure}{section}

\hypersetup{colorlinks = true}
\newtheorem{thm}{Theorem}[section]

\newtheorem{cor}[thm]{Corollary}
\newtheorem{lem}[thm]{Lemma}

\newtheorem{prop}[thm]{Proposition}

\newcommand{\pattern}[4]{%
    \raisebox{0.6ex}{%
        \begin{tikzpicture}[scale=0.35, baseline=(current bounding box.center), #1]
        \foreach \x/\y in {#4} {
            \fill[gray!60] (\x,\y) rectangle + (1,1);
        }
        \pgfmathtruncatemacro{\n}{#2}
        \foreach \i in {0,...,\n} {
            \ifnum\i=0
                \draw (\i,0) -- (\i,\n);
                \draw (0,\i) -- (\n,\i);
            \else\ifnum\i=\n
                \draw (\i,0) -- (\i,\n);
                \draw (0,\i) -- (\n,\i);
            \else
                \draw[black] (\i,0) -- (\i,\n);
                \draw[black] (0,\i) -- (\n,\i);
            \fi\fi
        }
        \foreach \x/\y in {#3} {
            \draw[black] (\x,\y) -- (\x+1,\y+1);
            \draw[black] (\x+1,\y) -- (\x,\y+1);
        }
        \end{tikzpicture}%
    }%
}

\allowdisplaybreaks

\begin{document}
	\begin{center}
		{\bf \Large Lower and upper bounds of Schur characters}
	\end{center}

    \begin{center}
	{\bf Zhuowei Lin$^1$, Candice X.T. Zhang$^2$ and Zhong-Xue Zhang$^3$      \\[6pt]}
	
	{\it$^{1}$~Center for Combinatorics, LPMC\\
		Nankai University, Tianjin 300071, P. R. China\\[8pt]
		$^{2}$~School of Mathematical Sciences, \\
		Tianjin University of Technology, Tianjin 300384, P. R. China\\[8pt]
        $^{3}$~Mathematics Teaching and Research Section, Basic Department\\
        Naval University of Engineering, Wuhan, Hubei 430030, P. R. China\\
		
		Email: $^1${\tt zwlin0825@163.com},\ \ $^2${ \tt zhang.xutong@foxmail.com},\ \ $^3${\tt 2520252129@nue.edu.cn}}
    \end{center}
	\noindent\textbf{Abstract.} 
    Characterizations of lower and upper bounds for dual characters of flagged Weyl modules have attracted considerable interest. In this paper, we establish explicit pattern avoidance characterizations for lower and upper bounds of Schur characters, namely, the dual characters of Weyl modules associated with arbitrary diagrams. This setting extends the corresponding extremal problems for dual characters of flagged Weyl modules and includes skew Schur polynomials and, through Rothe diagrams, Stanley symmetric functions.
    Specifically, for a permutation $w$, we show that the Stanley symmetric function $F_w$ attains the lower bound if and only if $w$ avoids $321,2143,2413,3142$, and $3412$, and attains the upper bound if and only if $w$ avoids $312$ and $321$.
   

	\noindent \emph{AMS Mathematics Subject Classification 2020: }05A05, 05E05 
	
	\noindent \emph{Keywords: Schur character, Weyl module, lower bound, upper bound} 
	\section{Introduction}

    The purpose of this paper is to investigate the lower and upper bounds of Schur characters, also known as the dual characters of Weyl modules. Let $D$ be a diagram contained in $[n]^2$, i.e., the $n\times n$ grid. 
    Equivalently, $D$ can be viewed as an ordered list of subsets $D=\left(D_1,D_2,\ldots,D_n\right)$, where $D_j=\{i\mid (i,j)\in D \}$ consists of all row indices of boxes in the $j$-th column of $D$. 
    The Schur character, denoted by $\chi_D$, expands into a sum of monomials with nonnegative integer coefficients. 
    
    For the case where every coefficient is either $0$ or $1$, these polynomials are called zero-one or multiplicity-free, and they may correspond to certain geometric objects. 
    Fink, M\'esz\'aros and St. Dizier~\cite{fink2021zero} gave the first pattern-avoidance characterization of zero-one Schubert polynomials by applying Magyar's orthodontia formula for Schubert polynomials~\cite{magyar1998schubert, mestrans}. 
    The correspondence between zero‑one Schubert polynomials and multiplicity‑free matrix Schubert varieties implies that all information about Grothendieck polynomials is encoded in their zero‑one counterparts. For Grothendieck polynomials, Chen, Fan and Ye~\cite{CFY} proved that such a polynomial is zero-one if and only if it avoids six specific patterns. For zero-one key polynomials, Hodges and Yong~\cite{hodges2023multiplicity} provided a criterion (first announced in~\cite[Theorem 4.10]{HY-1}) using the quasi-key model of Assaf and Searles~\cite{AS-1} together with the Kohnert diagram model~\cite{Koh}. 
    More generally, M\'esz\'aros, St. Dizier and Tanjaya~\cite{meszaros2021principal} studied zero-one dual characters of flagged Weyl modules and proposed a conjectural criterion, which was later confirmed by Guo, Lin and Peng~\cite{GLP}. In fact, their results unified the aforementioned results on Schubert polynomials and key polynomials.

	For the upper bounds of the dual characters of flagged Weyl modules, 
    Peng, Lin and Sun~\cite{PengLinSun2024} gave the sufficient and necessary conditions for these polynomials to attain their upper bounds.
    This characterization confirms a conjecture first proposed by M\'esz\'aros, St. Dizier, and Tanjaya~\cite{meszaros2021principal} in their study of the principal specialization of these dual characters.
    The work of Peng et al.~\cite{PengLinSun2024} not only involves the upper bounds results for Schubert polynomials and key polynomials previously given by Fan and Guo~\cite{fan2022upper}, but also implies the Lorentzian property for those polynomials that attain their upper bounds~\cite[Remark 1.2]{PengLinSun2024}. This greatly advances the resolution of the Lorentzian conjecture proposed by Huh et al.~\cite{Huh} regarding the normalized dual characters of flagged Weyl modules; see~\cite{BH} for more background on Lorentzian polynomials and \cite{fomin,GL} for more information of flagged Weyl module.   
    
    In this paper, we study the characterization of the bounds of Schur characters. These characters are known to contain more monomials than the characters of flagged Weyl modules, see~\cite{WYZZZ}. Magyar~\cite{Magyar-1998-2} established a Weyl-type character formula, which is the explicit formula for Schur characters in the case of general linear groups.
    As pointed out in \cite[Chapter 6]{Fulton}, skew Schur polynomials can be realized as special cases of dual characters of Weyl modules.
    Consequently, the characterization of the bounds of Schur characters may be applied to the Lorentzian conjecture for skew Schur polynomials proposed by Huh et al.~\cite{Huh}.    
    Another important class of Schur characters arises from Rothe diagrams. For a permutation $w$, the Schur function associated with its Rothe diagram agrees with the Stanley symmetric function $F_w$. Stanley symmetric functions were introduced in connection with reduced decompositions of permutations and form a distinguished family of Schur-positive symmetric functions; in particular, $F_w$ is a single Schur function precisely when $w$ is vexillary~\cite{Stanley1984,LascouxSchutzenberger1985}.

    Let $C=(C_1,C_2,\ldots,C_n)$ and $D=(D_1,D_2,\ldots,D_n)$ be two diagrams contained in $[n]^2$. We establish the explicit criteria for the Schur character $\chi_D$ to attain its coefficient-wise bounds. Let ${\bf x}=(x_1,\ldots,x_n)$ and for a diagram $C=(C_1,\ldots,C_n)$, define ${\bf x}^C=\prod_{j=1}^n\prod_{i\in C_j}x_i$. Then the bounds can be stated as follows:
	\begin{equation}\label{eq-ineq-x}
	\sum_{{\bf x}^a\in \{{\bf x}^C:\, |C_j|=|D_j|,\,\forall\, j\in[n] \}}{\bf x}^a\le \chi_D({\bf x})\le \sum_{\{C:\, |C_j|=|D_j|,\, \forall \, j\in [n]\}}{\bf x}^C,	
	\end{equation}
	where for two polynomials $f(\mathbf{x})=\sum_\alpha a_\alpha{\bf x}^\alpha$ and $g(\mathbf{x})=\sum_\alpha b_\alpha{\bf x}^\alpha$, we say $f(\mathbf{x})\le g(\mathbf{x})$ if $a_\alpha\le b_\alpha$ for all $\alpha$. Applying the principal specialization, the above inequality is equivalent to 
	\begin{equation}\label{eq-ineq-1}
		|\{{\bf x}^C:\, |C_j|=|D_j|,\,\forall\, j\in [n] \}|\le \chi_D(1,\ldots,1)\le |\{C:\,|C_j|=|D_j|, \,\forall \, j\in [n] \}|.
	\end{equation}
	
	To present our main results, we also need to introduce several definitions. Let $D$ be a diagram contained in $[n]^2$. We refer to each square $(i,j)\in D$ as a box, and call the remaining squares in $[n]^2\setminus D$ empty grids. 
    Since empty columns contribute only the factor $1$ to $\mathbf{x}^C$ and full columns contribute a fixed monomial $x_1x_2\cdots x_n$, they do not affect whether Schur characters attain the lower bounds. Therefore we may remove all empty columns and all full columns from a diagram $D$, and define the remaining part as the valid part of $D$. 
    The first main result of this paper is stated as follows.
	\begin{thm}\label{thm-lower}
		The Schur character $\chi_D({\bf x})$ is zero-one (attains the lower bound of \eqref{eq-ineq-x}) if and only if the valid part of $D$ satisfies one of the following conditions:		
		\begin{enumerate}
			\item[(a)] all boxes are in the same column;
			\item[(b)] all boxes are in the same row;
			\item[(c)] all empty grids 
            lie in a single row.
		\end{enumerate}
	\end{thm}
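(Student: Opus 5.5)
We work with the standard description of Schur characters: $\chi_D(\mathbf x)$ is the character of the Weyl module $M_D$, the span of the products of minors $\prod_j \det\bigl(Y_{D_j}^{C_j}\bigr)$ over all $C$ with $|C_j|=|D_j|$. This is the space $\{\prod_j \Delta_{C_j,D_j}(Y)\}$ with the torus acting on the row indices $C_j$. Consequently the weights of $\chi_D$ are exactly the monomials $\mathbf x^C$, and $\chi_D$ is a sum of characters of irreducible $GL_n$-modules.

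We first reduce to the valid part: removing empty or full columns multiplies $\chi_D$ by $1$ or by $x_1\cdots x_n$, and this does not affect zero-one-ness. Since $\chi_D$ is a nonnegative combination of Schur polynomials $s_\lambda(x_1,\dots,x_n)$, the polynomial is zero-one only if each constituent $s_\lambda$ appearing is zero-one and no two distinct constituents share a weight. A Schur polynomial $s_\lambda(x_1,\ldots,x_n)$ is zero-one exactly when $\lambda$, modulo full columns of height $n$, is a single row or a single column (or its complement in a rectangle $n\times k$ is a single row). Indeed, otherwise some weight has at least two SSYT.

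\textbf{Sufficiency.} In case (a) there is one column $D_1$ with $|D_1|=k$, so $\chi_D=e_k(x_1,\ldots,x_n)$. In case (b) all columns have size $1$, so the minors are linear forms in one row. The module is then $\mathrm{Sym}^m$ of the standard representation, and $\chi_D=h_m$. In case (c) each column is either full (already removed) or has size $n-1$ missing the same row $r$. Taking $m$ such columns, the module is the top-degree part $\bigl(\Lambda^{n-1}\bigr)^{\otimes}$ spanned by products of $m$ maximal minors of the same $(n-1)$-row submatrix. This is $\mathrm{Sym}^m(\Lambda^{n-1}V)$, whose character is $h_m$ evaluated at $(x_1\cdots x_n/x_i)_i$, which is zero-one. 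Each case is verified by identifying the module directly.

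\textbf{Necessity.} This is the main work. We show that if the valid part satisfies none of (a)--(c), then some coefficient is at least $2$. The key is to locate a small sub-configuration and use the fact that the coefficient of a monomial is monotone under adding columns. If $D=D'\sqcup D''$ column-wise, then $\chi_D\ge$ (coefficient-wise) ... but multiplication is not directly available, since $M_D\subseteq M_{D'}\otimes M_{D''}$ is only a submodule. Instead we exhibit explicit $GL_n$-highest weight vectors. Since $M_D$ contains the product of "diagonal" minors of each column, we aim to find two linearly independent weight vectors of the same weight.

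The cases are as follows.

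(i) Two valid columns $D_1,D_2$ with $|D_1|=a$ and $|D_2|=b$, where not both sizes equal $1$ and not both equal $n-1$ with the same missing row. Then $M_{(D_1,D_2)}$ is isomorphic, up to relabeling rows within columns, to a module containing $\Lambda^a\otimes\Lambda^b$ components of shape $(2^{\min},1^{|a-b|})$ or $(2^{a})$ when $a=b$ with $D_1\neq D_2$. We then check, via the Jacobi/Plücker relations, that two Schur constituents appear. Alternatively, a single non-hook-like constituent appears, producing multiplicity $2$ for the weight $x_1^2x_2\cdots$.

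(ii) The configuration with all columns of size $n-1$ but with different missing rows, which reduces to (i).

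We then lift from a two-column subdiagram to all of $D$ by multiplying by a fixed product of minors from the remaining columns. This preserves linear independence, since the polynomial ring is a domain and the leading terms can be compared via a term order.

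The main obstacle is step (i) with $|D_1|=|D_2|$ and $D_1\ne D_2$: for instance, $n=2$ with $D=(\{1\},\{2\})$. Here we expect the product to be $\mathrm{span}\{y_{11}y_{22}\text{-type}\}$. We would compute explicitly that the weight-$x_1x_2$ space contains $y_{11}y_{22}$ and $y_{21}y_{12}$, which are independent, so the coefficient is $2$. This small example is the template for the general independence argument.
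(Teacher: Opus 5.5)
Your sufficiency argument is sound. It differs from the paper only in presentation: you identify the module as (a quotient of) $\Lambda^kV$, $\mathrm{Sym}^mV$ or $\mathrm{Sym}^m(\Lambda^{n-1}V)$, while the paper shows that two generators $\det(Y^C_D)$, $\det(Y^{C'}_D)$ of the same weight are literally equal, because all valid columns coincide. Your lifting step is also correct: multiplying by a fixed nonzero product of the other minors preserves linear independence in the domain $\mathbb{C}[Y]$, and this is exactly how the paper localizes to two columns.

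\textbf{The gap is the necessity direction}, which is the real content of the theorem. Step (i) consists only of the assertion that ``two Schur constituents appear'' or ``a single non-hook-like constituent appears'', to be checked ``via the Jacobi/Pl\"ucker relations''. The $n=2$ computation with $y_{11}y_{22}$ and $y_{21}y_{12}$ is called a template, but no general argument follows. The sketch is also wrong in specific places:
\begin{enumerate}
\item[(1)] Your case split omits two singleton columns in different rows when $n\ge 3$, e.g.\ $D=(\{1\},\{2\})$, whose character $(x_1+\cdots+x_n)^2$ is not zero-one. It is excluded from (i) by ``not both sizes equal $1$'' and is not of type (ii).
\item[(2)] The dichotomy in (i) fails for $D=(\{1\},\{1,2\})$, $n=3$. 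The module has the single constituent $(2,1)$, a hook, yet $s_{21}(x_1,x_2,x_3)$ has coefficient $2$ at $x_1x_2x_3$. Conversely, the non-hook $(2,2)$ is zero-one in three variables. So ``hook'' is not the right test; the correct test is the classification you quoted at the start.
\item[(3)] The two-column module is never actually determined. It is $\mathbb{S}_{(2^a)}$ exactly when $D_1=D_2$.
\item[(4)] ``Two constituents appear'' gives a coefficient at least $2$ only once you add that two polynomial irreducibles of the same degree always share a weight (the most balanced partition, which every $\lambda$ dominates). You never state this.
\end{enumerate}

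\textbf{What is missing} is a uniform way to produce two non-proportional generators of the same weight. The paper gets this by reducing to three local patterns in two columns $j_1,j_2$:
\begin{enumerate}
\item[(I)] $D_{j_2}\subsetneq D_{j_1}$, with $i_1\in D_{j_2}$ and $i_2\in D_{j_1}\setminus D_{j_2}$;
\item[(II)] incomparable columns, with $i_1\in D_{j_2}\setminus D_{j_1}$ and $i_2\in D_{j_1}\setminus D_{j_2}$;
\item[(III)] all columns equal to some $S$ with $2\le|S|\le n-2$, which is forced once (b) and (c) fail.
\end{enumerate}
It then swaps one label between $C_{j_1}$ and $C_{j_2}$ ($1\leftrightarrow 2$ in (I) and (II), $2\leftrightarrow 3$ in (III)), which keeps the weight. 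It compares the two-column products $\det(Y^{C_{j_1}}_{D_{j_1}})\det(Y^{C_{j_2}}_{D_{j_2}})$ and $\det(Y^{C'_{j_1}}_{D_{j_1}})\det(Y^{C'_{j_2}}_{D_{j_2}})$. In (I) and (II), the variable $y_{1,i_2}$ occurs in the first with a nonzero cofactor but not in the second, since in $C'$ the label $1$ lies only in $C'_{j_2}$ and $i_2\notin D_{j_2}$. In (III), the monomial $y_{1,i_1}y_{3,i_1}$ occurs on one side only, because $1$ and $3$ both lie in $C'_{j_1}$ and a single minor uses column $i_1$ once.

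If you want to keep your representation-theoretic route, you can instead note that the two-column subdiagram becomes a skew shape after permuting rows, so its character is $s_{\lambda/\mu}$. You would then exhibit, in each of these three cases, two semistandard tableaux of the same content. This must actually be carried out for the proof to be complete.
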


    For Schur characters to attain the upper bounds, they must avoid not only the subdiagram required for the characters of flagged Weyl modules (as shown on the left of Figure~\ref{fig-upper}), but also the variation caused by row permutations.
	\begin{thm}\label{thm-upper}
		The Schur character $\chi_D({\bf x})$ attains the upper bound of \eqref{eq-ineq-x} if and only if $D$ avoids the configurations depicted in Figure~\ref{fig-upper}.
	\end{thm}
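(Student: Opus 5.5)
The plan is to work with the module itself rather than with the polynomial. Recall that $\chi_D$ is the character of the Weyl module $M_D\subseteq\mathbb{C}[GL_n]$. This module is spanned by the products $\prod_{j=1}^n\det\bigl(Y_{D_j,C_j}\bigr)$, where $C=(C_1,\dots,C_n)$ runs over all diagrams with $|C_j|=|D_j|$ and $Y_{R,S}$ is the minor of the generic matrix $Y$ with rows $R$ and columns $S$. The torus weight of such a product is ${\bf x}^C$. Hence $\chi_D$ attains the upper bound of \eqref{eq-ineq-x} if and only if these products are linearly independent. By \eqref{eq-ineq-1}, it is enough to compare $\dim M_D=\chi_D(1,\dots,1)$ with the number of admissible $C$.

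Two reductions come first.

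\emph{Symmetries.} Permuting the columns of $D$ only reorders the factors, and a permutation $\sigma$ of the rows of $D$ acts on $M_D$ by left translation. So the property is invariant under both. This is why Figure~\ref{fig-upper} contains, besides the flagged obstruction, its row-permuted variants.

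\emph{Restriction.} If $D'$ is obtained from $D$ by deleting some rows and columns (a subdiagram), we want: if $\chi_{D'}$ fails the upper bound, then so does $\chi_D$. To prove this, fix the minors in the deleted columns to suitable specific choices. Then specialize the variables in the deleted rows so that those minors become constants and the remaining minors restrict to minors for $D'$. A nontrivial linear relation for $D'$ then lifts to one for $D$. Equivalently, one can extract a suitable coefficient of $\chi_D$ and bound it below by the corresponding coefficient of $\chi_{D'}$.

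\emph{Necessity.} For each configuration in Figure~\ref{fig-upper}, I will exhibit an explicit linear dependence among the products of minors. In the basic case, two columns $R_1,R_2$ that cross appropriately, this comes from a Plücker (Laplace-type) exchange relation $\sum\pm\det Y_{R_1,S_1}\det Y_{R_2,S_2}=0$. The same conclusion can be reached by computing $\chi_{D'}(1,\dots,1)$ directly via Magyar's formula~\cite{Magyar-1998-2} and seeing that it is strictly smaller than the count of $C$. Row permutations of a configuration reduce to the basic case by the symmetry reduction. The restriction lemma then transfers the failure to every $D$ containing the configuration.

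\emph{Sufficiency.} Suppose $D$ avoids the configurations. First, classify the structure. I expect avoidance to force the following: after a row permutation and a column reordering, any two columns are either nested or disjoint, or more generally form a family with no "crossing" pairs of the forbidden shape. Then prove independence by a leading-term argument. Choose a term order on monomials $\prod y_{ij}$ under which the leading monomial of $\det Y_{R,S}$ is its diagonal term with respect to the ordering adapted to the structure. Next, show that the map $C\mapsto \mathrm{LT}\bigl(\prod_j\det Y_{D_j,C_j}\bigr)$ is injective. Distinct leading terms give linear independence. Injectivity should reduce to recovering each $C_j$ from the leading monomial, using nestedness to decide which entries belong to which column. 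Alternatively, once $D$ is put in such a normal form, one can try to identify $M_D$ with a flagged Weyl module having trivial flags. Then Peng, Lin and Sun~\cite{PengLinSun2024} applies directly.

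The main obstacle is the structural step in sufficiency: translating avoidance of all the configurations in Figure~\ref{fig-upper}, including their row-permuted variants, into a normal form for which the leading terms are pairwise distinct. If the leading-term injectivity fails for nested columns sharing many rows, my fallback is an induction on the number of columns. Remove a column that is maximal under inclusion. Then show the multiplication map $M_{D\setminus\{\text{col}\}}\otimes\Lambda^{k}\to M_D$ is injective by a dimension count, using Magyar's character formula.
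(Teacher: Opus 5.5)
\textbf{Main gap: sufficiency.} You never translate the avoidance condition, and the normal form you guess is the wrong one. Figure~\ref{fig-upper} contains both vertical orders. So $D$ avoids it exactly when there are no rows $r\neq s$ and columns $j\neq j'$ with $r\in D_j\cap D_{j'}$ and $s\notin D_j\cup D_{j'}$. That is, any two columns are either disjoint or have union $[n]$.

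Consequently, in the valid part, nested (or equal) columns are \emph{forbidden}, not typical. For example, $(\{1\},\{1,2\})\subseteq[3]^2$ gives $\chi_D=s_{21}(x_1,x_2,x_3)$ with $8<9$ terms. Overlapping non-nested columns such as $(\{1,2\},\{2,3\})\subseteq[3]^2$ are allowed.

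Diagonal leading terms also cannot separate the generators. Take $D=(\{1,2\},\{2,3\},\{1,3\})\subseteq[3]^2$. It avoids the configurations, and all $27$ products are genuinely independent: the minors are the entries of the cofactor matrix of $Y$, which are algebraically independent. Yet $C=(\{1,3\},C_2,\{2,3\})$ and $C'=(\{2,3\},C_2,\{1,3\})$ both have diagonal leading monomial $y_{1,1}y_{1,2}y_{2,3}y_{3,3}$ times that of column $2$. For any total orders on rows and on entries, the two columns sharing the smallest row collide in the same way.

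Your passage to \cite{PengLinSun2024} also relied on the nonexistent nested normal form. The fallback is circular as stated: proving injectivity of $M_{D\setminus\mathrm{col}}\otimes\Lambda^k\to M_D$ by a dimension count presupposes knowing $\dim M_D$.

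\textbf{The missing idea} is the one the paper uses: row labels and entries both live in $[n]$. Order terms first by the number of factors $y_{i,i}$. Break ties lexicographically by the vector whose $i$-th coordinate is the sum of the entries $c\neq i$ over the factors $y_{i,c}$. This is a weight order. The unique top term of $\det Y_{D_j,C_j}$ uses $y_{i,i}$ for every $i\in C_j\cap D_j$. It matches the remaining rows, top to bottom, with the remaining entries in decreasing order.

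Now suppose admissible $C\neq C'$ had the same top monomial. Then some factor $y_{i,k}$ with $k\neq i$ would come from column $j$ in one product and from column $j'\neq j$ in the other. Since $i\in D_j\cap D_{j'}$, avoidance gives $k\in D_j\cup D_{j'}$. But then $k$ would have been used in $y_{k,k}$ instead, a contradiction. Triangularity then yields independence.

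\textbf{Necessity also needs repair.} The row half of your restriction lemma runs backwards. A specialization sends relations among $D$-products to relations among $D'$-products; only independence lifts. Deleting columns is fine, since you multiply a relation by $\prod_j\det Y_{D_j,D_j}$ over the other columns.

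So keep all $n$ rows of the two offending columns $j_1,j_2$. After a row permutation they form a skew shape, and you must show $s_{\lambda/\mu}(1^n)<\binom{n}{|D_{j_1}|}\binom{n}{|D_{j_2}|}$. The paper does this by exhibiting a $C$ realized by no SSYT. The Laplace expansion of a determinant with a repeated row, which you seem to have in mind, needs $|D_{j_1}|+|D_{j_2}|\le n$, so it does not cover all cases.
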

	
		\begin{figure}[ht]
		\centering
		\begin{minipage}{0.35\textwidth}
			\begin{tikzpicture}
				\centering
				\draw[dashed, black] (-1.75,2) -- (1.75,2);
				\draw[dashed, black] (-1.75,1.5) -- (1.75,1.5);
				\draw[dashed, black] (-1.75,0) -- (1.75,0);
				\draw[dashed, black] (-1.75,0.5) -- (1.75,0.5);
				\draw[dashed, black] (-0.5,-0.5) -- (-0.5,2.5);
				\draw[dashed, black] (-1,-0.5) -- (-1,2.5);
				\draw[dashed, black] (1,-0.5) -- (1,2.5);
				\draw[dashed, black] (0.5,-0.5) -- (0.5,2.5);
				\draw[black] (-1,1.5) rectangle (-0.5,2);
				\draw[black] (0.5,1.5) rectangle (1,2);
				\draw[black] (-1,1.5) -- (-0.5,2);
				\draw[black] (0.5,1.5) -- (1,2);
				\draw[black] (-1,2) -- (-0.5,1.5);
				\draw[black] (0.5,2) -- (1,1.5);
				\node at (-2.25,1.75) {$i_1$};
				\node at (-2.25,0.25) {$i_2$};
				\node at (-0.75,2.85) {$j_1$};
				\node at (0.75,2.85) {$j_2$};
				\filldraw [lightgray] (-1,0) rectangle (-0.5,0.5);
				\draw[black] (-1,0) rectangle (-0.5,0.5);
				\filldraw [lightgray] (0.5,0) rectangle (1,0.5);
				\draw[black] (0.5,0) rectangle (1,0.5);
			\end{tikzpicture}
		\end{minipage}
		\begin{minipage}{0.35\textwidth}
			\begin{tikzpicture}
				\centering
				\draw[dashed, black] (-1.75,2) -- (1.75,2);
				\draw[dashed, black] (-1.75,1.5) -- (1.75,1.5);
				\draw[dashed, black] (-1.75,0) -- (1.75,0);
				\draw[dashed, black] (-1.75,0.5) -- (1.75,0.5);
				\draw[dashed, black] (-0.5,-0.5) -- (-0.5,2.5);
				\draw[dashed, black] (-1,-0.5) -- (-1,2.5);
				\draw[dashed, black] (1,-0.5) -- (1,2.5);
				\draw[dashed, black] (0.5,-0.5) -- (0.5,2.5);
				\draw[black] (-1,0) rectangle (-0.5,0.5);
				\draw[black] (0.5,0) rectangle (1,0.5);
				\draw[black] (-1,0) -- (-0.5,0.5);
				\draw[black] (0.5,0) -- (1,0.5);
				\draw[black] (-1,0.5) -- (-0.5,0);
				\draw[black] (0.5,0.5) -- (1,0);
				\node at (-2.25,1.75) {$i_1$};
				\node at (-2.25,0.25) {$i_2$};
				\node at (-0.75,2.85) {$j_1$};
				\node at (0.75,2.85) {$j_2$};
				\filldraw [lightgray] (-1,1.5) rectangle (-0.5,2);
				\draw[black] (-1,1.5) rectangle (-0.5,2);
				\filldraw [lightgray] (0.5,1.5) rectangle (1,2);
				\draw[black] (0.5,1.5) rectangle (1,2);
			\end{tikzpicture}
		\end{minipage}
		\caption{Two forbidden configurations for $D$.}
		\label{fig-upper}
	\end{figure}
	
	This paper is organized as follows. In Section~\ref{sec-pre}, we present the basic definitions and notation. Sections~\ref{sec-lower} and~\ref{sec-upper} are devoted to the proofs of Theorems~\ref{thm-lower} and~\ref{thm-upper}, respectively. At the end of each section, we specialize the corresponding result to Rothe diagrams and obtain a pattern-avoidance characterization for Stanley symmetric functions.

	\section{Preliminaries}\label{sec-pre}
    In this section, we recall the basic definitions and background related to Schur characters, mainly following the construction of Magyar \cite{Magyar-1998-2}; see also~\cite[Chapter 6]{Fulton}.
	
	For any diagram $D$, let  $\Sigma_D$ be the symmetric group permuting the elements of $D$, and we define two subgroups of the symmetric group $\Sigma_D$ by
	\begin{equation*}
		{\rm Col}(D)=\{\pi  \in \Sigma_D \mid \mbox{ for any $(i,j)\in D$  there exists $i'$ such that $\pi(i,j)=(i^{\prime},j)$}\},
	\end{equation*}
	\begin{equation*}
		{\rm Row}(D)=\{\pi  \in \Sigma_D \mid  \mbox{ for any $(i,j)\in D$  there exists $j'$ such that $\pi(i,j)=(i,j')$}\}.
	\end{equation*}
	Set
	\begin{equation*}
		\alpha_{D}=\frac{1}{|{\rm Row}(D)|}\sum_{\pi \in {\rm Row}(D)}\pi \quad\text{ and }\quad \beta_{D}=\frac{1}{|{\rm Col}(D)|}\sum_{\pi \in {\rm Col}(D)}{\rm sgn}(\pi)\pi,
	\end{equation*}
	where ${\rm sgn}(\pi)$ is the sign of the permutation $\pi$.
	Then the Schur module is defined by
	\begin{equation*}
		\mathbb{S}_{D}={\rm Im}\left(\alpha_D \beta_D\mid_{V^{\otimes |D|}}  \right)=V^{\otimes |D|} \alpha_D \beta_D \subset V^{\otimes |D|},
	\end{equation*}
	which is a $G$-module under the diagonal action of the general linear group $G$.
	
	As aforementioned, Weyl modules are the dual of Schur modules as $G$-modules \cite{Magyar-1998-2}. In this paper, we adopt the following definition of Weyl modules in terms of determinants~\cite{magyar1998schubert}.
	Let $Y=\left( y_{i,j}\right)_{1\le i, j\le n}$ be an $n\times n$ matrix with indeterminates $\{y_{i,j}\}_{1\le i, j\le n}$, and
	let $\mathbb{C}[Y]$ be the polynomial ring in the indeterminates $\{y_{i,j} \}_{1\le i, j \le n}$.
	For any diagram $D=\left(D_1,\ldots,D_n \right)$, the Weyl module $\mathbb{W}_{D}$ is defined by
	\begin{equation}\label{def-Weyl-module-matrix}
		\mathbb{W}_{D}={\rm Span}_{\mathbb{C}}\left\{\prod_{j=1}^{n}{\rm det}\left(Y_{D_j}^{C_j} \right) \mid C=\left( C_1,\ldots,C_n\right), C_j\subseteq [n] \text{ and } |C_j|=|D_j|\text{ for }j\in [n],  \right\},
	\end{equation}
	where $Y_{D_j}^{C_j}$ denotes the submatrix of $Y$ obtained by restricting to rows $C_j$ and columns $D_j$.
	The Weyl module $\mathbb{W}_{D}$
	is also a $G$-module under the $G$-action on $\mathbb{C}[Y]$ defined by
	\begin{equation*}
		f(Y)\cdot g=f\left(g^{-1}Y \right), \forall g\in G.
	\end{equation*}
	Magyar \cite{Magyar-1998-2} established the duality between $\mathbb{S}_D$ and $\mathbb{W}_D$ as follows.
	
	\begin{prop}[{\cite[Proposition 1]{Magyar-1998-2}}]
		\label{prop-dual}
		For any diagram $D$, the dual of
		$\mathbb{S}_D$ is isomorphic to $\mathbb{W}_D$ as $G$-modules.
	\end{prop}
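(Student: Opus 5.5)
The statement just above is Magyar's duality (Proposition~\ref{prop-dual}), which is cited rather than proved. I will therefore sketch how I would establish it directly, using the determinantal model \eqref{def-Weyl-module-matrix}.

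\textbf{Step 1: a pairing.} Identify $V^{\otimes|D|}$ with tensors indexed by the boxes of $D$. A basis vector $e_{\mathbf c}$ assigns a row label $c(i,j)\in[n]$ to each box $(i,j)\in D$. Let $\mathbb{C}[Y]_D$ be the span of monomials $\prod_{(i,j)\in D} y_{c(i,j),\,i}$. These are the multihomogeneous polynomials in which column $i$ of $Y$ is used with multiplicity equal to the number of boxes of $D$ in row $i$. There is a $G$-equivariant surjection
\[
\Phi: (V^{\otimes |D|})^* \to \mathbb{C}[Y]_D, \qquad e_{\mathbf c}^*\mapsto \prod_{(i,j)\in D} y_{c(i,j),i}.
\]
It is equivariant because $g$ acts on $Y$ by left multiplication, which matches the diagonal action on each tensor factor, once the contragredient action $f(Y)\cdot g=f(g^{-1}Y)$ is used.

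\textbf{Step 2: identify the image of the dual of $\mathbb{S}_D$.} Since $\mathbb{S}_D=V^{\otimes|D|}\alpha_D\beta_D$, its dual is the quotient of $(V^{\otimes|D|})^*$ by the annihilator of this image. Equivalently, it is the image of the transposed operator $(\alpha_D\beta_D)^{t}=\beta_D^{t}\alpha_D^{t}$ acting on $(V^{\otimes|D|})^*$.

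Apply $\Phi$ to this image.
\begin{itemize}
\item Averaging over $\mathrm{Row}(D)$ permutes boxes within a row. This does not change the monomial $\prod y_{c(i,j),i}$, since the second index is the row $i$, so $\Phi\circ\alpha_D^t=\Phi$.
\item The antisymmetrizer over $\mathrm{Col}(D)$ permutes labels within column $j$ with sign. This converts $\prod_{i\in D_j} y_{c(i,j),i}$ into $\tfrac{1}{|D_j|!}\det\bigl(Y^{C_j}_{D_j}\bigr)$ when the labels $C_j$ are distinct, and into $0$ otherwise.
\end{itemize}
Hence $\Phi$ maps the image of $\beta_D^t\alpha_D^t$ onto the span of the products $\prod_j \det\bigl(Y^{C_j}_{D_j}\bigr)$, which is $\mathbb{W}_D$.

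\textbf{Step 3: injectivity (main obstacle).} It remains to show that $\Phi$ restricted to this image is injective. The risk is that the row symmetrization collapses more than the annihilator of $\mathbb{S}_D$.

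My plan is to compare dimensions by showing that $\Phi$ restricted to $\mathrm{Im}(\alpha_D^t)$ is an isomorphism onto $\mathbb{C}[Y]_D$. Row-symmetric tensors are exactly symmetric powers over the rows, and $\bigotimes_i \mathrm{Sym}^{r_i}(V)\cong \mathbb{C}[Y]_D$ via $\Phi$, where $r_i$ is the number of boxes in row $i$. This is the standard identification of symmetric tensors with polynomials. Then $\beta^t$ applied after $\alpha^t$ factors through this isomorphism, so no additional kernel appears.

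The delicate bookkeeping is the order of the operators: the module is $\alpha_D\beta_D$, not $\beta_D\alpha_D$. I would handle this via the standard isomorphism $V\alpha\beta\cong V\beta\alpha$ given by right multiplication, which is valid since $\alpha\beta\alpha\beta$ is a nonzero multiple of $\alpha\beta$ up to the Young-symmetrizer-type identity. Alternatively, I would dualize directly, noting that transposition reverses the order of the operators.
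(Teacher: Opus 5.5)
The paper gives no proof of this proposition; it quotes Magyar. Your Steps 1 and 2 are the standard argument and are correct. Step 3 as written does not go through, because you reverse the order of the transposed operators.

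With $T(v)=v\alpha_D\beta_D$, the transpose $T^t\phi=\phi\circ T$ satisfies $(T^t\phi)(v\sigma)=\phi(v\sigma\alpha_D\beta_D)=(T^t\phi)(v)$ for every $\sigma\in\mathrm{Row}(D)$, because $\sigma\alpha_D=\alpha_D$. So on $(V^{\otimes|D|})^*$ the operator $T^t$ is column antisymmetrization \emph{followed by} row symmetrization. Hence $\mathrm{Im}(T^t)\subseteq\mathrm{Im}(\alpha_D^t)\cong\bigotimes_i\mathrm{Sym}^{r_i}V^*$. Your surjectivity computation in Step 2 already presupposes this order: you discard $\alpha_D^t$ using $\Phi\circ\alpha_D^t=\Phi$, which is legitimate only if $\alpha_D^t$ acts last.

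With the correct order, injectivity follows immediately from your own correct remark that $\Phi$ is injective on row-symmetric tensors. Then $\mathbb{S}_D^*\cong\mathrm{Im}(T^t)\xrightarrow{\ \Phi\ }\mathbb{W}_D$ is the desired $G$-isomorphism, and no dimension count is needed.

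What you wrote instead is that $\beta^t$ applied after $\alpha^t$ ``factors through this isomorphism, so no additional kernel appears.'' This does not follow. In that order the image lies in $\bigotimes_j\Lambda^{|D_j|}V^*$, not among the row-symmetric tensors, and $\Phi$ is generally not injective there. Already for a single row of two boxes, $\Phi(e_a^*\otimes e_b^*-e_b^*\otimes e_a^*)=y_{a,1}y_{b,1}-y_{b,1}y_{a,1}=0$. This non-injectivity is exactly what Theorem~\ref{thm-upper} measures.

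Your proposed repair is also unsound, because $\alpha_D\beta_D$ is not quasi-idempotent for general diagrams. Take $D=D((3,2)/(1))$, so $\mathrm{Row}(D)\cong S_2\times S_2$ and $\mathrm{Col}(D)\cong S_2$. Then $\mathbb{C}\Sigma_D\,\alpha_D\beta_D\cong S^{(3,1)}\oplus S^{(2,2)}$ (Specht modules). Right multiplication by $\alpha_D\beta_D$ acts on these two summands by the different scalars $1/2$ and $3/4$, so $(\alpha_D\beta_D)^2$ is not a multiple of $\alpha_D\beta_D$.

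The isomorphism $V^{\otimes|D|}\alpha_D\beta_D\cong V^{\otimes|D|}\beta_D\alpha_D$ is nevertheless true, for a different reason. Right multiplications by $\alpha_D$ and $\beta_D$ are orthogonal projections for the standard Hermitian form, so $\|x\|^2=\langle x\alpha_D,v\rangle$ for $x=v\alpha_D\beta_D$. Hence right multiplication by $\alpha_D$ is injective on $V^{\otimes|D|}\alpha_D\beta_D$, and symmetrically for $\beta_D$.

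You do not need this isomorphism at all. Make your ``alternatively, dualize directly'' the main line of the argument.
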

	
	
	We are mainly concerned with the characters of $\mathbb{S}_D$ and dual characters of $\mathbb{W}_D$. Let $N$ be a $G$-module or $B$-module over $\mathbb{C}$, and let $X$ be the diagonal matrix with entries $x_1,x_2,\ldots,x_n$. By taking $N$ as a vector space over $\mathbb{C}$,
	the action of $X$ on $N$ can be seen as a linear map  $X:\, N\to N$. Then the character of $N$ is given by
	\begin{equation*}
		{\rm char}(N)(x_1,\ldots ,x_n)={\rm tr}(X:\, N\to N),
	\end{equation*}
	and the dual character of $N$ is defined as the character of $N^*$:
	\begin{align*}
		{\rm char}^*(N)(x_1,\ldots,x_n)&={\rm tr}(X:\, N^*\to N^*)
		={\rm char}(N)(x_1^{-1},\ldots,x_n^{-1}).
	\end{align*}

	In this paper, for a diagram $D$, we refer to the dual character of the Weyl module $\mathbb{W}_D$ as the Schur character, and denote it by
	\[
	\chi_D := \operatorname{char}^*(\mathbb{W}_D).
	\]
    Equivalently, $\chi_D$ is also the character of the corresponding Schur module $\mathbb{S}_{D}$ by Proposition~\ref{prop-dual}. 
	Given two diagrams $D, D'$, we say they are equivalent, denoted $D \simeq D'$, if $D$ can be obtained from $D'$ by permuting rows and columns. By the definition of the Weyl module \eqref{def-Weyl-module-matrix} and the duality between Schur and Weyl modules (Proposition~\ref{prop-dual}), permuting rows or columns of a diagram $D$ yields an isomorphic Schur module $\mathbb{S}_D$ (see also~\cite[Page 16]{Liu2010}). Consequently, for equivalent diagrams $D \simeq D'$, the corresponding Schur characters coincide: $\chi_D = \chi_{D'}$.
    
    In particular, consider the case where $D$ is the Young diagram $D(\lambda/\mu)$ associated with the skew partition $\lambda/\mu$. Then the Schur character $\chi_{D(\lambda/\mu)}$ is closely related to the skew Schur polynomial. To make this precise, we first recall some basic definitions.
    A partition of $d$ is a weakly decreasing sequence $\lambda = (\lambda_1, \lambda_2, \dots, \lambda_\ell)$ such that $\lambda_1 + \lambda_2 + \cdots + \lambda_\ell = d$. Each partition $\lambda$ can be represented by its Young diagram $D(\lambda)$, which consists of $\ell(\lambda)$ rows of left-justified squares, with the $i$-th row containing $\lambda_i$ squares for $1 \leq i \leq \ell(\lambda)$. Given two partitions $\lambda$ and $\mu$ satisfying $\mu_i \leq \lambda_i$ for all $i$, we define the skew partition $\lambda/\mu$. Its Young diagram $D(\lambda/\mu)$ is obtained by removing the Young diagram of $\mu$ from the top-left corner of that of $\lambda$. A semistandard Young tableau (SSYT) of shape $\lambda/\mu$ is a filling $T = (T_{ij})$ of the squares of $D(\lambda/\mu)$ with positive integers such that entries are weakly increasing along each row and strictly increasing down each column. If, for each $i \geq 1$, the entry $i$ appears exactly $\alpha_i$ times in $T$, we call $\alpha = (\alpha_1, \alpha_2, \dots)$ the type of $T$, denoted $\mathrm{type}(T)$.
    Given a skew partition $\lambda/\mu$, the skew Schur polynomial in $n$ variables $\mathbf{x} = (x_1, \dots, x_n)$ is defined by
    \begin{align}\label{eq-def-Schur}
        s_{\lambda/\mu}(\mathbf{x}) = \sum_{T}  \mathbf{x}^{T},
    \end{align}
    where the sum ranges over all SSYT of shape $\lambda/\mu$ and for such a tableau $T$ with content $\alpha = (\alpha_1,\alpha_2,\ldots)$ (where $\alpha_i$ is the number of entries equal to $i$ in $T$), we set $\mathbf{x}^{T} = \prod_{i \geq 1} x_i^{\alpha_i}$.
    It is well known that the skew Schur polynomial in $n$ variables coincides with the Schur character $\chi_{D(\lambda/\mu)}(x_1, \dots, x_n)$, where $D(\lambda/\mu)$ is viewed as a Young diagram embedded in an $n \times n$ grid of squares.
	\begin{thm}[{\cite[Chapter 6]{Fulton}}]\label{them-Schur-Schur module}
		Given a skew partition $\lambda/\mu$, we have
		\begin{align}\label{eq-skew-schur-dual-weyl}
			\chi_{D(\lambda/\mu)}(x_1,\ldots,x_n)=s_{\lambda/\mu}(x_1,\ldots,x_n).
		\end{align}
	\end{thm}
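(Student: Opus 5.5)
The plan is to compute $\chi_{D(\lambda/\mu)}$ directly from the determinantal model \eqref{def-Weyl-module-matrix} of $\mathbb{W}_D$. The idea is to exhibit a weight basis of $\mathbb{W}_{D(\lambda/\mu)}$ indexed by semistandard tableaux of shape $\lambda/\mu$, and then read off the dual character.

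\textbf{Step 1: the spanning products are weight vectors.} Let $X=\mathrm{diag}(x_1,\ldots,x_n)$. Under the action $f(Y)\cdot X=f(X^{-1}Y)$, row $i$ of $Y$ is scaled by $x_i^{-1}$. Hence $\det(Y^{C_j}_{D_j})$ is multiplied by $\prod_{i\in C_j}x_i^{-1}$. So each product $\prod_j\det(Y^{C_j}_{D_j})$ is a weight vector of weight $(\mathbf{x}^C)^{-1}$, and after passing to the dual character it contributes $\mathbf{x}^C$. Consequently, if $\mathcal{B}$ is any set of such products forming a basis of $\mathbb{W}_D$, then $\chi_D=\sum_{C\in\mathcal{B}}\mathbf{x}^C$.

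\textbf{Step 2: identify the candidate basis.} Encode $C=(C_1,\ldots,C_n)$ as a filling $T_C$ of $D(\lambda/\mu)$. In column $j$, place the elements of $C_j$ in increasing order down the boxes of that column. Because each $C_j$ is a set, columns of $T_C$ are automatically strictly increasing. The candidate basis is the set of $C$ for which $T_C$ also has weakly increasing rows, i.e.\ $T_C$ is an SSYT of shape $\lambda/\mu$. The type of $T_C$ gives exactly $\mathbf{x}^C$, so by \eqref{eq-def-Schur} it remains to show these products form a basis.

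\textbf{Step 3: spanning, via straightening.} Take a non-semistandard $C$. There are then adjacent columns $j<j+1$ and a row in which the entry of column $j$ exceeds that of column $j+1$. Apply the Garnir/Sylvester exchange relation between the two minors. It expresses $\det(Y^{C_j}_{D_j})\det(Y^{C_{j+1}}_{D_{j+1}})$ as a signed sum of products $\det(Y^{C'_j}_{D_j})\det(Y^{C'_{j+1}}_{D_{j+1}})$ in which certain entries are exchanged between $C_j$ and $C_{j+1}$. The multiset $C_j\cup C_{j+1}$ is preserved, so weights are preserved. In the skew setting, $D_j$ and $D_{j+1}$ are row intervals whose top and bottom endpoints are weakly monotone in $j$. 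I expect this to be the main obstacle: one must check that the classical Desarménien--Kung--Rota straightening still applies when the two columns are offset. The relation should be taken over the overlapping rows of the two columns, and one needs that every term produced is strictly larger in a suitable order on fillings, namely the lexicographic order on column words read left to right. I would first try the standard argument of Fulton, Chapter~8, applied to the overlapping rows only, since the non-overlapping boxes do not enter any violated row comparison. Termination then gives spanning by semistandard products.

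\textbf{Step 4: linear independence.} Fix a term order on monomials in the $y_{i,d}$. Choose it so that the leading term of $\det(Y^{C_j}_{D_j})$ is the diagonal product $\prod_k y_{c_k,d_k}$, pairing the $k$-th smallest element of $C_j$ with the $k$-th smallest row of $D_j$ (for instance, lex order with variables ordered by $(i,d)$). The leading monomial of a product of minors is then $\prod_{(d,j)\in D} y_{T_C(d,j),d}$. This records, for each row $d$, the multiset of entries of $T_C$ in row $d$. For semistandard $T_C$, rows are weakly increasing, so the row multisets determine $T_C$ completely. Hence distinct SSYT give distinct leading monomials, and the products are linearly independent.

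Combining Steps 1--4 gives $\chi_{D(\lambda/\mu)}=\sum_{T}\mathbf{x}^{T}=s_{\lambda/\mu}(x_1,\ldots,x_n)$, which is \eqref{eq-skew-schur-dual-weyl}.
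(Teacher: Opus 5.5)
Steps 1, 2 and 4 are sound. The lex term order does make the diagonal product the leading term of each minor, and for semistandard $T_C$ the row multisets recorded by the leading monomial determine $T_C$. The paper gives no proof of this statement; it quotes it from Fulton--Harris. So the spanning argument of Step 3 has to be supplied in full, and that is where the gap is.

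The repair you suggest, running Fulton's Chapter~8 exchange on the overlapping rows only, produces false identities. Fulton's relations use that $D_{j+1}\subseteq D_j$, which holds for straight shapes. In a skew shape this nesting fails, and the boxes of column $j+1$ above the overlap still enter through the column set $D_{j+1}$ of $Y$. Already for $\lambda/\mu=(2,2)/(1)$ we have $D_1=\{2\}$ and $D_2=\{1,2\}$. Set $F(a;b,c)=y_{a,2}\,(y_{b,1}y_{c,2}-y_{b,2}y_{c,1})$. Antisymmetrizing only the two entries in the common row $2$ gives
\[
F(a;b_1,b_2)-F(b_2;b_1,a)=y_{b_1,2}\,(y_{a,1}y_{b_2,2}-y_{a,2}y_{b_2,1})\neq 0 .
\]
The Sylvester-type exchange of the top entry of column $2$ with the entry of column $1$ fails in the same way.

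What does work is a Garnir relation sized by the \emph{union} of the two columns.
Write $D_j=\{t_j,\dots,u_j\}$ and $D_{j+1}=\{t_{j+1},\dots,u_{j+1}\}$, so that $t_{j+1}\le t_j$ and $u_{j+1}\le u_j$. Let the column entries be $a_{t_j}<\dots<a_{u_j}$ and $b_{t_{j+1}}<\dots<b_{u_{j+1}}$, indexed by rows, and suppose $a_r>b_r$.
Take
\[
Z=\{a_r,\dots,a_{u_j}\}\cup\{b_{t_{j+1}},\dots,b_r\},
\]
that is, column $j$ weakly below the violation together with all of column $j+1$ weakly above it, including the rows above the overlap. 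Then $|Z|=u_j-t_{j+1}+2=|D_j\cup D_{j+1}|+1$. The product of the two minors depends on the rows of $Y$ placed in these $|Z|$ slots only through their coordinates in $D_j\cup D_{j+1}$. Hence antisymmetrizing over $Z$ gives an alternating form in more vectors than that dimension, which is $0$; in the example this is the valid three-term relation $F(a;b_1,b_2)-F(b_1;a,b_2)+F(b_2;a,b_1)=0$.

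Grouping by the subset of $Z$ sent to column $j$ writes the original product as a signed sum of products in which some of $a_r,\dots,a_{u_j}$ are replaced in $C_j$ by $b$'s, all smaller than $a_r$. Terms with a repeated row vanish. Columns $1,\dots,j-1$ are untouched, and sorted column $j$ drops componentwise, strictly somewhere. So the new terms are strictly \emph{smaller}, not larger, in your left-to-right lexicographic order on column words, and finiteness gives termination. With this relation in place of the overlap-only one, your four steps prove the statement.
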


	
	By a similar consideration as stated in \cite[{p. 471}]{2018Schubert}, Wang et al.~\cite{WYZZZ} gave the following characterization of monomials in Schur characters.
	
	\begin{prop}[{\cite[Proposition 4.4]{WYZZZ}}]\label{prop-monomial-dual-Weyl}
		For any diagram $D\subseteq [n]^2$, the monomials in the Schur character $\chi_D(x_1,\ldots,x_n)$ are precisely those of the form
		\[\left\{\prod_{j=1}^n\prod_{i\in C_j}x_i \mid C_j\subseteq [n] \text{ and }|C_j|=|D_j|\text{ for each }j\in [n] \right\}.\]
	\end{prop}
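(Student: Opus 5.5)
The plan is to compute the torus weights of the spanning set in \eqref{def-Weyl-module-matrix} directly and show that $\mathbb{W}_D$ splits as a direct sum of weight spaces, each spanned by products of determinants of a single weight. The monomials of $\chi_D$ are then exactly the weights that actually occur.

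\textbf{Step 1: each spanning product is a weight vector.} Let $X=\mathrm{diag}(x_1,\ldots,x_n)$ act on $\mathbb{C}[Y]$ by $f(Y)\cdot X=f(X^{-1}Y)$. Replacing $Y$ by $X^{-1}Y$ multiplies row $i$ of $Y$ by $x_i^{-1}$. Since $Y_{D_j}^{C_j}$ uses exactly the rows indexed by $C_j$, multilinearity of the determinant gives
\[
\det\bigl((X^{-1}Y)_{D_j}^{C_j}\bigr)=\Bigl(\prod_{i\in C_j}x_i^{-1}\Bigr)\det\bigl(Y_{D_j}^{C_j}\bigr).
\]
Hence $f_C:=\prod_{j}\det(Y_{D_j}^{C_j})$ is an eigenvector of $X$ with eigenvalue $({\bf x}^C)^{-1}$.

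\textbf{Step 2: decomposition into weight spaces.} Eigenvectors of the commuting diagonal torus with distinct characters are linearly independent. Therefore $\mathbb{W}_D=\bigoplus_{a}W_a$, where $W_a$ is the span of those $f_C$ with ${\bf x}^C={\bf x}^a$; in particular $W_a$ is contained in the $({\bf x}^{a})^{-1}$-eigenspace. Taking the trace gives
\[
\mathrm{char}(\mathbb{W}_D)({\bf x})=\sum_a \dim W_a\,({\bf x}^a)^{-1}.
\]
Inverting the variables, the dual character is $\chi_D({\bf x})=\sum_a\dim W_a\,{\bf x}^a$. So the coefficient of ${\bf x}^a$ in $\chi_D$ is the nonnegative integer $\dim W_a$.

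\textbf{Step 3: every admissible $C$ contributes.} It remains to show that $\dim W_a>0$ exactly when ${\bf x}^a={\bf x}^C$ for some admissible $C$, that is, some $C$ with $C_j\subseteq[n]$ and $|C_j|=|D_j|$ for all $j$. One direction is immediate, since $W_a$ is spanned by such $f_C$. For the other, fix any admissible $C$. Each $\det(Y_{D_j}^{C_j})$ is a determinant of a square matrix of distinct indeterminates, so it is a nonzero polynomial; for instance, its diagonal monomial appears with coefficient $\pm1$. Since $\mathbb{C}[Y]$ is an integral domain, $f_C\neq0$, and so $W_{a}\neq0$ for ${\bf x}^a={\bf x}^C$.

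The only real care needed is the sign and inversion bookkeeping in Steps 1 and 2. The action uses $g^{-1}$, which produces inverse weights, and the dual character inverts them back. This gives ${\bf x}^C$ rather than its inverse. Everything else is the standard weight-space decomposition.
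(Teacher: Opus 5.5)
Your proposal is correct and takes essentially the paper's approach. The paper quotes this proposition from [WYZZZ], which follows the weight-vector argument of Fink--M\'esz\'aros--St.~Dizier for flagged Weyl modules, and it uses exactly your weight-space description (the coefficient of $\mathbf{x}^a$ is the dimension of the span of the $\det(Y_D^C)$ with $\mathbf{x}^C=\mathbf{x}^a$) in the proof of Theorem~\ref{thm-lower}; the one difference from the flagged case is that $Y$ is a full generic matrix, so every product of minors is nonzero, which is your Step~3.
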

	
Finally, we consider another special diagram. Let $w=w_1\cdots w_n\in \mathfrak{S}_n$. An inversion of $w$ is a pair $(i,k)$ such that $i<k$ and $w_i>w_k$. We say that $w$ contains a pattern $v=v_1\cdots v_m\in\mathfrak{S}_m$ if there exist indices $1\leq i_1<\cdots<i_m\leq n$ such that $w{i_1},\ldots,w{i_m}$ has the same relative order as $v_1,\ldots,v_m$; otherwise, $w$ is said to avoid $v$. The Rothe diagram of $w$ is
\[
D(w)=\{(i,j)\in[n]^2 \mid j<w_i,\ i<w^{-1}(j)\}.
\]
Recall that inversions of $w$ are in bijection with the boxes of $D(w)$ via
\[
(i,k)\longmapsto (i,w_k),\text{ for }i<k\text{ and } w_i>w_k.
\]
Under the generalized Schur function convention for arbitrary diagrams, the Schur function of $D(w)$ agrees with the Stanley symmetric function in the sense that $s_{D(w)}=F_w$.
Thus results concerning Schur modules of Rothe diagrams may be interpreted directly as results concerning Stanley symmetric functions.

In general, $F_w$ need not be a single Schur function. A permutation $w$ is called vexillary if it avoids the pattern $2143$. The following classical result characterizes precisely when a Stanley symmetric function is a single Schur function.
\begin{thm}[{\cite{Stanley1984,LascouxSchutzenberger1985}}]
    Let $w\in S_n$. Then the Stanley symmetric function $F_w$ is a single Schur function if and only if $w$ is vexillary. More precisely,
    \[ F_w=s_{\lambda(w)},\]
where $\lambda(w)$ is the partition obtained by rearranging the row lengths of the Rothe diagram $D(w)$ in weakly decreasing order.
\end{thm}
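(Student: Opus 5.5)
The plan is to prove the two directions separately. For the ``if'' direction I will pass through the diagram equivalence $\simeq$ and Theorem~\ref{them-Schur-Schur module}. For the ``only if'' direction I will use the Edelman--Greene expansion of $F_w$. Throughout, I identify $F_w$ with the Schur function of the Rothe diagram $D(w)$, as recalled above. I treat this identification as given, even though the $n$-variable Schur character $\chi_D$ of the preceding section is a priori a different object.

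\emph{Step 1 (vexillary $\Rightarrow$ single Schur function).} First I show that if $w$ avoids $2143$, then the rows of $D(w)$, viewed as subsets of $[n]$, are totally ordered by inclusion, and likewise the columns. Suppose two rows $i_1<i_2$ of $D(w)$ were incomparable. Then there are boxes $(i_1,j_1)$ and $(i_2,j_2)$ with $(i_1,j_2)\notin D(w)$ and $(i_2,j_1)\notin D(w)$. Unwinding the definition $D(w)=\{(i,j): j<w_i,\ i<w^{-1}(j)\}$, the indices $i_1,i_2,w^{-1}(j_1),w^{-1}(j_2)$ should then carry the values of a $2143$ pattern. This translation is the step I will check most carefully. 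Once the total order holds for both rows and columns, I sort the rows by decreasing size and the columns by decreasing size. The result is a diagram $D'\simeq D(w)$ that is a Young diagram $D(\lambda)$, where $\lambda$ is the multiset of row lengths rearranged, i.e.\ $\lambda=\lambda(w)$. The preliminaries give $\chi_{D(w)}=\chi_{D(\lambda)}$ for equivalent diagrams, and Theorem~\ref{them-Schur-Schur module} with $\mu=\emptyset$ then gives $\chi_{D(\lambda)}=s_{\lambda(w)}$. Since this holds for every $n$, it gives $F_w=s_{\lambda(w)}$.

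\emph{Step 2 (non-vexillary $\Rightarrow$ not a single Schur function).} Here I use the Edelman--Greene expansion $F_w=\sum_\mu a_{w\mu}s_\mu$. In it, $a_{w\mu}\ge 0$ counts increasing tableaux of shape $\mu$ whose reading word is a reduced word of $w$. The extreme terms of this expansion are classical: $s_{\lambda(w)}$ and $s_{\lambda(w^{-1})'}$ both occur, each with coefficient $1$, and every $\mu$ with $a_{w\mu}\neq 0$ satisfies $\lambda(w)\le\mu\le\lambda(w^{-1})'$ in dominance order. So it suffices to show that $\lambda(w)\ne\lambda(w^{-1})'$ when $w$ contains $2143$. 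A diagram with these row lengths and column lengths whose rows are not totally ordered cannot be equivalent to a Young diagram. I would make this precise by a Gale--Ryser type argument. The conjugate of the row-length partition equals the column-length partition exactly when the $0$--$1$ matrix is the unique matrix with those margins. That matrix is the row- and column-sorted Young diagram, and it has totally ordered rows. The incomparable pair of rows from the $2143$ occurrence (Step 1 in reverse) breaks this uniqueness.

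\emph{Main obstacle.} I expect Step 2 to be the hard part. This is the genuinely non-elementary ingredient, namely the extremal terms of the Edelman--Greene expansion together with the dominance bounds. If I cannot simply cite it, I would prove directly that both $s_{\lambda(w)}$ and $s_{\lambda(w^{-1})'}$ appear, by exhibiting explicit increasing tableaux: the row-reading and column-reading insertions of the canonical reduced words. The combinatorial translation between $2143$ and incomparable rows in Step 1 is routine by comparison.
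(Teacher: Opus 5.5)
The paper gives no proof of this statement; it is quoted from Stanley and Lascoux--Sch\"utzenberger. Your proposal is therefore best judged as a reconstruction, and it is a correct one modulo the inputs you flag.

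The translation in Step 1 goes through. Since $w^{-1}(j_2)>i_2>i_1$, the condition $(i_1,j_2)\notin D(w)$ forces $w_{i_1}<j_2$. Hence $j_1<w_{i_1}<j_2<w_{i_2}$, so $(i_2,j_1)\notin D(w)$ forces $w^{-1}(j_1)<i_2$. The positions $i_1<w^{-1}(j_1)<i_2<w^{-1}(j_2)$ then carry $w_{i_1},j_1,w_{i_2},j_2$ in the relative order $2143$. Conversely, a $2143$ at positions $a<b<c<d$ makes rows $a$ and $c$ incomparable via the boxes $(a,w_b)$ and $(c,w_d)$. Column nestedness is automatic once the rows are nested, and sorting then gives $D(w)\simeq D(\lambda(w))$. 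So, granting $s_{D(w)}=F_w$ (a genuine theorem that the paper also only asserts), the ``if'' direction is just the paper's remark about diagrams equivalent to (skew) Young diagrams, combined with Theorem~\ref{them-Schur-Schur module}.

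Step 2 is Stanley's original argument, and your Gale--Ryser step is sound. The sorted column lengths of $D(w)$ are $\lambda(w^{-1})$ because $D(w^{-1})=D(w)^{T}$. The $k$ longest columns hold at most $\sum_i\min(r_i,k)$ boxes, and equality for all $k$ forces every row to be an initial segment of the columns sorted by length, i.e.\ nested rows.

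The cost of this route is that all the depth lies in the cited extremal-term theorem, which is essentially the statement itself. Your fallback via Edelman--Greene insertion of specific reduced words is not justified as stated: you would have to prove that the resulting shapes are exactly $\lambda(w)$ and $\lambda(w^{-1})'$.

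A lighter route stays inside the paper's framework. Let $\mu$ be the partition of column lengths of $D$. By Proposition~\ref{prop-monomial-dual-Weyl} and the count $\sum_{i\in S}\#\{j: i\in C_j\}\le\sum_j\min(|D_j|,|S|)$, every monomial of $\chi_D$ has sorted exponent dominated by $\mu'$. Moreover, $\mathbf{x}^{\mu'}$ occurs (take $C_j=\{1,\ldots,|D_j|\}$). Comparing dominance-maximal monomials on both sides, $\chi_{D(w)}=s_\nu$ forces $\nu=\lambda(w^{-1})'$. The same argument for $D(w^{-1})$, using $F_{w^{-1}}=\omega F_w$ for the involution $\omega$, forces $\nu=\lambda(w)$. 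The Gale--Ryser equality case then gives nested rows, i.e.\ $2143$-avoidance. This replaces the Edelman--Greene input by the much softer $\omega$-symmetry.
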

More generally, if the Rothe diagram of $w$ is equivalent, up to permutations of rows and columns, to a skew Young diagram $\lambda/\mu$, then we can obtain that $F_w=s_{\lambda/\mu}$.
Hence Stanley symmetric functions extend both ordinary and skew Schur functions.
	
	\section{Proof of Theorem~\ref{thm-lower}}\label{sec-lower}
	
	This section is devoted to the proof of Theorem~\ref{thm-lower}. Following the construction in~\cite{PengLinSun2024}, for diagrams $D=(D_1,D_2,\ldots,D_n)$ and $C=(C_1,C_2,\ldots,C_n)$ of $[n]^2$ with $|C_j|=|D_j|$ for each $j\in[n]$, we define the filling $F$ of $D$ with respect to $C$ by filling the boxes in the $j$-th column of $D$ with the elements of $C_j$, and denote the entry filled in the box $(i,j)$ by $c_{ij}$.
    Let $\mathcal{F}_D(C)$ denote the set of those fillings with respect to diagrams $C$ and $D$.
    Following the definitions in~\cite{PengLinSun2024},  for any filling $F\in\mathcal{F}_D(C)$, the inversion 
    set of $F$ is 
    \begin{equation}\label{eq-inv}
        {\rm inv} (F)=\left\{ \left((i,j),\,(k,j)\right) \mid i<k,\,1\le j\le n \text{ and } c_{ij}>c_{kj} \text{ for any } i,k\in D_j\right\},
    \end{equation}
    and the sign of $F$ is ${\rm sgn}(F)=(-1)^{|{\rm inv}(F)|}$.
    Then each monomial of the polynomial $\det(Y^C_D):=\prod_{j=1}^n \det(Y^{C_j}_{D_j})$
    can be represented by some filling $F \in \mathcal{F}_D(C)$ as follows.
	\begin{lem}\label{lem-det-y^F}
	Given two diagrams $D=(D_1,D_2,\ldots,D_n)$ and $C=(C_1,C_2,\ldots,C_n)$ of $[n]^2$ with $|C_j|=|D_j|$ for each $j\in[n]$, we have
    \begin{equation}\label{eq-filling-det}
        \det(Y^C_D)=\sum_{F\in \mathcal{F}_D(C) }{\rm sgn}(F) y^{F},
    \end{equation}
    where $y^F=\prod_{(i,j)\in D}y_{c_{ij},i}$.
	\end{lem}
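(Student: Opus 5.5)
The plan is to expand each factor $\det(Y^{C_j}_{D_j})$ by the Leibniz formula and then multiply the expansions over $j\in[n]$. The fillings $F\in\mathcal{F}_D(C)$ are exactly the data of one Leibniz term per column, so the argument is essentially bookkeeping.

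First, fix a column $j$ and write $D_j=\{i_1<\cdots<i_m\}$ and $C_j=\{c_1<\cdots<c_m\}$, where $m=|D_j|=|C_j|$. The submatrix $Y^{C_j}_{D_j}$ has rows indexed by $C_j$ and columns by $D_j$, both in increasing order, and its $(c_s,i_t)$ entry is $y_{c_s,i_t}$. By the Leibniz formula (expanding along columns),
\[
\det(Y^{C_j}_{D_j})=\sum_{\sigma}\operatorname{sgn}(\sigma)\prod_{t=1}^m y_{c_{\sigma(t)},\,i_t},
\]
where $\sigma$ ranges over permutations of $[m]$. Each $\sigma$ corresponds bijectively to a filling of the boxes $(i_t,j)$ of column $j$ by the entries $c_{i_t j}:=c_{\sigma(t)}$, and these fillings use each element of $C_j$ exactly once. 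Under this correspondence the monomial $\prod_t y_{c_{\sigma(t)},i_t}$ is $\prod_{i\in D_j} y_{c_{ij},i}$.

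Next, I check the sign. The sign of $\sigma$ equals $(-1)^{\#\{s<t:\ \sigma(s)>\sigma(t)\}}$. Since $c_1<\cdots<c_m$ and $i_1<\cdots<i_m$, a pair $s<t$ with $\sigma(s)>\sigma(t)$ is the same thing as a pair of rows $i_s<i_t$ in $D_j$ with $c_{i_s j}>c_{i_t j}$. This is exactly the column-$j$ part of $\operatorname{inv}(F)$ as defined in \eqref{eq-inv}.

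Finally, I take the product over $j$. A filling $F\in\mathcal{F}_D(C)$ is precisely an independent choice of one column filling for each $j$, so expanding the product gives a sum over $\mathcal{F}_D(C)$. Each term is $\prod_{(i,j)\in D}y_{c_{ij},i}=y^F$ with sign $\prod_j(-1)^{|\operatorname{inv}_j(F)|}$. Because $\operatorname{inv}(F)$ is the disjoint union of its column parts $\operatorname{inv}_j(F)$, this sign equals $\operatorname{sgn}(F)$, which yields \eqref{eq-filling-det}.

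The only delicate point is the sign convention. One must confirm that ordering the rows of the submatrix by $C_j$ increasing and its columns by $D_j$ increasing makes the Leibniz sign agree with counting inversions of the entries read down the column. An empty column contributes the empty determinant $1$, consistent with there being a unique empty filling of that column.
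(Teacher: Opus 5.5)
Your proof is correct and is essentially the argument the paper relies on; the paper gives no details and only defers to the analogous Lemma~2.2 of \cite{PengLinSun2024}. That argument is the same as yours: expand each $\det(Y^{C_j}_{D_j})$ by the Leibniz formula with rows and columns in increasing order, identify permutations with column fillings and their inversions with the column part of ${\rm inv}(F)$, and multiply over the columns.
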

    \begin{proof}
    The proof follows the argument similar to  that of \cite[Lemma~2.2]{PengLinSun2024}.
    \end{proof}

    Now we are in a position to present the first main result of this section.
    \begin{thm}\label{thm-lower-equivalent-condition}
        The diagram $D=(D_1,\ldots, D_n)$ satisfies one of the conditions illustrated in Theorem~\ref{thm-lower}, if and only if the equality $\det\left(Y^C_D\right)=\det\left(Y^{C'}_D\right)$ holds for each pair $(C,C')$ satisfying
		\begin{equation}\label{eq-lower-condition}
			|C_j|=|C'_j|=|D_j| \text{ for }j\in[n] \text{ and } {\bf x}^C={\bf x}^{C'}.
		\end{equation}
    \end{thm}
    \begin{proof}
    When no ambiguity arises, we may identify $D$ with its valid part, and the same holds for any diagram $C$ with $|C_j| = |D_j|$ for all  $j \in [n]$. In this sense, we suppose $D$ contains $m$ columns.
    
        We first prove the sufficiency. 

        \emph{Case (a): all boxes are in the same column.}

        The diagram $D$ consists of a single column.
        Then, when there is no ambiguity, we abuse notation simply by writing $|C|=|D|$ since each consists of a single set in this case. The monomial $\textbf{x}^C$ is uniquely determined by $C$.
        Hence $\textbf{x}^C=\textbf{x}^{C'}$ implies $C=C'$, and the equality of determinants follows immediately.
        
        \emph{Case (b): all boxes are in the same row.}

        For $1\le j \le m$, the $j$-th column of $D$ contains exactly one box.
        Hence for any $C$, each filling $F \in \mathcal{F}_D(C)$ is uniquely determined by $C$. 
        Thus $\mathcal{F}_D(C)$ and $\mathcal{F}_D(C')$ each contain exactly one filling, say $F$ and $F'$ respectively.
        Since $\textbf{x}^C=\textbf{x}^{C'}$, the multisets of entries in $C$ and $C'$ are identical. 
        Therefore, $F'$ can be obtained from $F$ by interchanging the entries of $F$ which implies that $y^{F}=y^{F'}$. Thus we obtain that $|\det(Y^C_D)| = |\det(Y^{C'}_D)|$. Since ${\rm sgn}(F)$ is invariant under the interchanging of the entries of $F$, we have $\det(Y^C_D) = \det(Y^{C'}_D)$ by Lemma~\ref{lem-det-y^F}.
        
        \emph{Case (c): all empty grids lie in a single row.}
        
        In this case, since all empty cells lie in a single row (say row $r$), each column of $D$ contains exactly one empty cell, and that empty cell is in row $r$. Hence for each column $j$, we have $D_j = [n] \setminus \{r\}$. For any $C$ with $|C_j| = |D_j| = n-1$, we can write $C_j = [n] \setminus \{\ell_j\}$ for some $\ell_j \in [n]$. Similarly, for $C'$ we write $C'_j = [n] \setminus \{\ell'_j\}$. The monomials are then
        $${\bf x}^C=\frac{x_1^mx_2^m\cdots x_n^m}{x_{\ell_1}x_{\ell_2}\cdots x_{\ell_m}}\qquad \text{and}\qquad {\bf x}^{C'}=\frac{x_1^mx_2^m\cdots x_n^m}{x_{\ell'_1}x_{\ell'_2}\cdots x_{\ell'_m}}.$$
        Since ${\bf x}^C={\bf x}^{C'}$, we have the multisets $\{\ell_1,\ldots,\ell_m\}=\{\ell'_1,\ldots,\ell'_m\}$. 
        So we can establish a bijection between $\mathcal{F}_D(C)$ and $\mathcal{F}_D(C')$ by interchanging the columns of each filling, that is, columns with the same label are moved to the resulting positions with respect to $C'$. Since interchanging the columns of $F$ does not change the sign of a filling, we have ${\rm sgn}(F) = {\rm sgn}(F')$ for corresponding fillings. Consequently, $\det(Y^C_D) = \det(Y^{C'}_D)$ by Lemma~\ref{lem-det-y^F}.
        For example, for diagrams $$C=\{\{3\},\{2\},\{1\},\{1\},\{2\},\{4\},\{2\}\},\qquad C'=\{\{2\},\{3\},\{1\},\{4\},\{1\},\{2\},\{2\}\},$$ this bijection is depicted in Figure~\ref{fig-ex-Case(c)}.

\begin{figure}[h]
    \centering
    \begin{minipage}[b]{0.45\textwidth}
        \centering
        \begin{tikzpicture}[scale=0.6]

            \filldraw [lightgray] (0,0) rectangle (7,3);
            \foreach \x in {0,1,2,3,4,5,6,7} {
                \draw[black] (\x, -1) -- (\x, 3);
            }
            \foreach \y in {0,1,2,3} {
                \draw[black] (0, \y) -- (7, \y);
            }
            \foreach \x in {0,1,2,3,4,5,6} {
                \draw[black] (\x, -1) -- (\x+1, 0);
            }
            \foreach \x in {0,1,2,3,4,5,6} {
                \draw[black] (\x, 0) -- (\x+1, -1);
            }
            \draw[black] (0, -1) -- (7, -1);
            
            \node at (-0.75, 4) {$l_j$};
            
            \node at (0.5, 0.5) {4};
            \node at (1.5, 0.5) {1};
            \node at (2.5, 0.5) {2};
            \node at (3.5, 0.5) {4};
            \node at (4.5, 0.5) {4};
            \node at (5.5, 0.5) {2};
            \node at (6.5, 0.5) {4};
            
            \node at (0.5, 1.5) {1};
            \node at (1.5, 1.5) {4};
            \node at (2.5, 1.5) {3};
            \node at (3.5, 1.5) {3};
            \node at (4.5, 1.5) {1};
            \node at (5.5, 1.5) {1};
            \node at (6.5, 1.5) {3};
            
            \node at (0.5, 2.5) {2};
            \node at (1.5, 2.5) {3};
            \node at (2.5, 2.5) {4};
            \node at (3.5, 2.5) {2};
            \node at (4.5, 2.5) {3};
            \node at (5.5, 2.5) {3};
            \node at (6.5, 2.5) {1};
            
            \node at (0.5, 4) {3};
            \node at (1.5, 4) {2};
            \node at (2.5, 4) {1};
            \node at (3.5, 4) {1};
            \node at (4.5, 4) {2};
            \node at (5.5, 4) {4};
            \node at (6.5, 4) {2};
        \end{tikzpicture}
    \end{minipage}
     $\longrightarrow$
    \begin{minipage}[b]{0.45\textwidth}
        \centering
        \begin{tikzpicture}[scale=0.6]
             \filldraw [lightgray] (0,0) rectangle (7,3);
            \foreach \x in {0,1,2,3,4,5,6,7} {
                \draw[black] (\x, -1) -- (\x, 3);
            }
            \foreach \y in {0,1,2,3} {
                \draw[black] (0, \y) -- (7, \y);
            }
            \draw[black] (0, -1) -- (7, -1);
            
            \foreach \x in {0,1,2,3,4,5,6} {
                \draw[black] (\x, -1) -- (\x+1, 0);
            }
            \foreach \x in {0,1,2,3,4,5,6} {
                \draw[black] (\x, 0) -- (\x+1, -1);
            }
            \node at (-0.75, 4) {$l'_j$};
            
            \node at (0.5, 0.5) {1};
            \node at (1.5, 0.5) {4};
            \node at (2.5, 0.5) {2};
            \node at (3.5, 0.5) {2};
            \node at (4.5, 0.5) {4};
            \node at (5.5, 0.5) {4};
            \node at (6.5, 0.5) {4};
            
            \node at (0.5, 1.5) {4};
            \node at (1.5, 1.5) {1};
            \node at (2.5, 1.5) {3};
            \node at (3.5, 1.5) {1};
            \node at (4.5, 1.5) {3};
            \node at (5.5, 1.5) {1};
            \node at (6.5, 1.5) {3};
            
            \node at (0.5, 2.5) {3};
            \node at (1.5, 2.5) {2};
            \node at (2.5, 2.5) {4};
            \node at (3.5, 2.5) {3};
            \node at (4.5, 2.5) {2};
            \node at (5.5, 2.5) {3};
            \node at (6.5, 2.5) {1};
            
            \node at (0.5, 4) {2};
            \node at (1.5, 4) {3};
            \node at (2.5, 4) {1};
            \node at (3.5, 4) {4};
            \node at (4.5, 4) {1};
            \node at (5.5, 4) {2};
            \node at (6.5, 4) {2};
        \end{tikzpicture}
    \end{minipage}
    \caption{An instance of the bijection in Case (c).}
    \label{fig-ex-Case(c)}
\end{figure}


        {For the necessity, suppose that $D$ satisfies none of conditions \textup{(a)}, \textup{(b)}, and \textup{(c)}. We identify $D$ with its valid part, so every column is a nonempty proper subset of $[n]$, and $D$ has at least two columns. We claim that, up to row and column permutations, $D$ contains one of Configurations \textup{(I)}, \textup{(II)}, and \textup{(III)} in Figure~\ref{fig-upper1}.}
        
         {If two columns $D_{j_1}$ and $D_{j_2}$ are distinct and incomparable under inclusion, choose $i_1\in D_{j_2}\setminus D_{j_1}$ and $i_2\in D_{j_1}\setminus D_{j_2}$. Then the rows $i_1,i_2$ and columns $j_1,j_2$ form Configuration \textup{(II)}. If instead the two columns are comparable, we may assume that $D_{j_2}\subsetneq D_{j_1}$. Choose $i_1\in D_{j_2}$ and $i_2\in D_{j_1}\setminus D_{j_2}$. Then these rows and columns form Configuration \textup{(I)}. It remains to consider the case in which all columns are equal, say $D_j=S$ for every $j$. Since conditions \textup{(b)} and \textup{(c)} both fail, we have
        $$|S|\geq 2\qquad\text{and}\qquad|[n]\setminus S|\geq 2.$$
        Choose distinct rows $i_1,i_2\in S$ and $i_3,i_4\in[n]\setminus S$, together with any two distinct columns $j_1,j_2$. These rows and columns form Configuration \textup{(III)}.
        Thus, whenever $D$ satisfies none of conditions \textup{(a)}, \textup{(b)}, and \textup{(c)}, it contains, up to row and column permutations, at least one of the three configurations in Figure~\ref{fig-upper1}.}
        
        
        Suppose the chosen configuration occurs in rows $i_1, i_2$ (and possibly $i_3, i_4$ for (III)) and columns $j_1, j_2$. 
        Note that we do not consider the configuration \pattern{scale=1.2}{2}{0/0,0/1,1/1}{1/0}, since $D$ contains no empty columns. More precisely, if $D$ contains at least one box $(i,j_1)$ with $i < i_1$, then after suitable row and column permutations, this leads to Configuration (I) if $(i,j_2) \in D$ and Configuration (II) if $(i,j_2) \notin D$.

\begin{figure}[ht]
    \centering
    \begin{minipage}[b]{0.3\textwidth}
        \centering
        \begin{tikzpicture}
            \draw[dashed, black] (-0.5,1) -- (1.5,1);
            \draw[dashed, black] (-0.5,0.5) -- (1.5,0.5);
            \draw[dashed, black] (-0.5,0) -- (1.5,0);
            \draw[dashed, black] (0,-0.5) -- (0,1.5);
            \draw[dashed, black] (0.5,-0.5) -- (0.5,1.5);
            \draw[dashed, black] (1,-0.5) -- (1,1.5);
            \filldraw [lightgray] (0,0) rectangle (0.5,0.5);
            \filldraw [lightgray] (0,0.5) rectangle (0.5,1);
            \filldraw [lightgray] (0.5,0.5) rectangle (1,1);
            \draw[black] (0,0) rectangle (0.5,0.5);
            \draw[black] (0.5,0) rectangle (1,0.5);
            \draw[black] (0,0.5) rectangle (0.5,1);
            \draw[black] (0.5,0.5) rectangle (1,1);
            \draw[black] (0.5,0.5) -- (1,0);
            \draw[black] (0.5,0) -- (1,0.5);
            \node at (0.5,-2) {(I)};
        \end{tikzpicture}
    \end{minipage}
    \begin{minipage}[b]{0.3\textwidth}
        \centering
        \begin{tikzpicture}
            \draw[dashed, black] (-0.5,1) -- (1.5,1);
            \draw[dashed, black] (-0.5,0.5) -- (1.5,0.5);
            \draw[dashed, black] (-0.5,0) -- (1.5,0);
            \draw[dashed, black] (0,-0.5) -- (0,1.5);
            \draw[dashed, black] (0.5,-0.5) -- (0.5,1.5);
            \draw[dashed, black] (1,-0.5) -- (1,1.5);
            \filldraw [lightgray] (0,0) rectangle (0.5,0.5);
            \filldraw [lightgray] (0.5,0.5) rectangle (1,1);
            \draw[black] (0,0) rectangle (0.5,0.5);
            \draw[black] (0.5,0) rectangle (1,0.5);
            \draw[black] (0,0.5) rectangle (0.5,1);
            \draw[black] (0.5,0.5) rectangle (1,1);
            \draw[black] (0.5,0.5) -- (1,0);
            \draw[black] (0.5,0) -- (1,0.5);
            \draw[black] (0,0.5) -- (0.5,1);
            \draw[black] (0,1) -- (0.5,0.5);
            \node at (0.5,-2) {(II)};
        \end{tikzpicture}
    \end{minipage}
    \begin{minipage}[b]{0.3\textwidth}
        \centering
        \begin{tikzpicture}
            \draw[dashed, black] (-0.5,1) -- (1.5,1);
            \draw[dashed, black] (-0.5,0.5) -- (1.5,0.5);
            \draw[dashed, black] (-0.5,0) -- (1.5,0);
            \draw[dashed, black] (-0.5,1.5) -- (1.5,1.5);
            \draw[dashed, black] (-0.5,-0.5) -- (1.5,-0.5);
            \draw[dashed, black] (0,-1) -- (0,2);
            \draw[dashed, black] (0.5,-1) -- (0.5,2);
            \draw[dashed, black] (1,-1) -- (1,2);
            \filldraw [lightgray] (0,0.5) rectangle (1,1.5);
            \draw[black] (0,0) rectangle (0.5,0.5);
            \draw[black] (0.5,0) rectangle (1,0.5);
            \draw[black] (0,0.5) rectangle (0.5,1);
            \draw[black] (0.5,0.5) rectangle (1,1);
            \draw[black] (0,0.5) rectangle (0.5,1);
            \draw[black] (0,1) rectangle (0.5,1.5);
            \draw[black] (0.5,1) rectangle (1,1.5);
            \draw[black] (0.5,-0.5) rectangle (1,0.5);
            \draw[black] (0,-0.5) rectangle (0.5,0);
            \draw[black] (0.5,0.5) -- (1,0);
            \draw[black] (0,-0.5) -- (0.5,-0.5);
            \draw[black] (0,0) -- (0.5,-0.5);
            \draw[black] (0,-0.5) -- (0.5,0);
            \draw[black] (0.5,-0.5) -- (1,0);
            \draw[black] (0.5,0) -- (1,-0.5);
            \draw[black] (0,0) -- (0.5,0.5);
            \draw[black] (0,0.5) -- (0.5,0);
            \draw[black] (0.5,0) -- (1,0.5);
            
            \node at (0.5,-1.5) {(III)};
        \end{tikzpicture}
    \end{minipage}
    \caption{Three subdiagrams that $D$ should avoid.}
    \label{fig-upper1}
\end{figure}
        
			
		Next we want to find a pair $(C,C')$ satisfying the conditions in \eqref{eq-lower-condition} for which $\det\left(Y^C_D\right)\neq \det\left(Y^{C'}_D\right)$. 
        Suppose the chosen configuration occurs in rows $i_1, i_2$ (and possibly $i_3, i_4$ for (III)) and columns $j_1, j_2$. 
		Without loss of generality, we may take $C$ and $C'$ to differ only in columns $j_1$ and $j_2$, and set $C_j = C'_j$ for all $j \notin \{j_1, j_2\}$. 
        Then $\det\left(Y^C_D\right)= \det\left(Y_D^{C'}\right)$ holds if and only if
        \begin{align}\label{subdet}
    \det\left(Y_{D_{j_1}}^{C_{j_1}}\right)\cdot \det\left(Y_{D_{j_2}}^{C_{j_2}}\right)=\det\left(Y_{D_{j_1}}^{C'_{j_1}}\right)\cdot \det\left(Y_{D_{j_2}}^{C'_{j_2}}\right).
    \end{align}

		For configurations (I) and (II), let 
		$$C_{j_1}=\{1,a_1,\ldots,a_{p}\},\qquad C_{j_2}=\{2,b_1,\ldots,b_q\},$$
		$$C'_{j_1}=\{2,a_1,\ldots,a_{p}\},\qquad C'_{j_2}=\{1,b_1,\ldots,b_q\},$$
        where $\{a_1,\ldots,a_{p}\}$ and $\{b_1,\ldots,b_q\}$ are two subsets of $[n]\setminus \{1,2\}$. This construction is well-defined since, by the condition that $D$ contains at least one empty grid in columns $j_1$ and $j_2$, we have $p,\,q \le n-2$.
		We next consider fillings $F \in \mathcal{F}_D(C)$ and $F' \in \mathcal{F}_D(C')$ that coincide everywhere except in the boxes  $(i_1,j_2)$, $(i_2,j_1)$ and $(i_1,j_1)$ (for Configuration (I)). 
        In Configuration (I), the three boxes are filled with $1$, $2$ and $a_1$; in Configuration (II), the two boxes are filled with $1$ and $2$, respectively. See Figure~\ref{fig-ex-AB} for an explicit description.

		\begin{figure}[h]
        \centering
            \begin{minipage}[b]{0.4\textwidth}
            \centering
            \begin{tikzpicture}
			\draw[dashed, black] (-1.75,2) -- (1.75,2);
			\draw[dashed, black] (-1.75,1.5) -- (1.75,1.5);
			\draw[dashed, black] (-1.75,0) -- (1.75,0);
			\draw[dashed, black] (-1.75,0.5) -- (1.75,0.5);
			\draw[dashed, black] (-0.5,-0.5) -- (-0.5,2.5);
			\draw[dashed, black] (-1,-0.5) -- (-1,2.5);
			\draw[dashed, black] (1,-0.5) -- (1,2.5);
			\draw[dashed, black] (0.5,-0.5) -- (0.5,2.5);
			\draw[black] (0.5,0) rectangle (1,0.5);
			\draw[black] (0.5,0.5) -- (1,0);
			\draw[black] (0.5,0) -- (1,0.5);
            \node at (-2.25,1.75) {$i_1$};
            \node at (-2.25,0.25) {$i_2$};
            \node at (-0.75,2.85) {$j_1$};
            \node at (0.75,2.85) {$j_2$};
			\filldraw [lightgray] (-1,0) rectangle (-0.5,0.5);
			\draw[black] (-1,0) rectangle (-0.5,0.5);
			\filldraw [lightgray] (-1,1.5) rectangle (-0.5,2);
			\draw[black] (-1,1.5) rectangle (-0.5,2);
            \filldraw [lightgray] (0.5,1.5) rectangle (1,2);
            \draw[black] (0.5,1.5) rectangle (1,2);
            \node at (-0.75,1.75) {$a_1$};
            \node at (-0.75,0.25) {1};
            \node at (0.75,1.75) {2};
            \end{tikzpicture}
            \end{minipage}
            \begin{minipage}[b]{0.4\textwidth}
            \centering
            \begin{tikzpicture}
			\draw[dashed, black] (-1.75,2) -- (1.75,2);
			\draw[dashed, black] (-1.75,1.5) -- (1.75,1.5);
			\draw[dashed, black] (-1.75,0) -- (1.75,0);
			\draw[dashed, black] (-1.75,0.5) -- (1.75,0.5);
			\draw[dashed, black] (-0.5,-0.5) -- (-0.5,2.5);
			\draw[dashed, black] (-1,-0.5) -- (-1,2.5);
			\draw[dashed, black] (1,-0.5) -- (1,2.5);
			\draw[dashed, black] (0.5,-0.5) -- (0.5,2.5);
			\draw[black] (0.5,0) rectangle (1,0.5);
			\draw[black] (0.5,0.5) -- (1,0);
			\draw[black] (0.5,0) -- (1,0.5);
            \node at (-2.25,1.75) {$i_1$};
            \node at (-2.25,0.25) {$i_2$};
            \node at (-0.75,2.85) {$j_1$};
            \node at (0.75,2.85) {$j_2$};
			\filldraw [lightgray] (-1,0) rectangle (-0.5,0.5);
			\draw[black] (-1,0) rectangle (-0.5,0.5);
			\draw[black] (-1,1.5) rectangle (-0.5,2);
            \filldraw [lightgray] (0.5,1.5) rectangle (1,2);
            \draw[black] (0.5,1.5) rectangle (1,2);
            \draw[black] (-1,1.5) -- (-0.5,2);
			\draw[black] (-1,2) -- (-0.5,1.5);
            \node at (-0.75,0.25) {1};
            \node at (0.75,1.75) {2};
		\end{tikzpicture}
        \end{minipage}
		    \caption{An instance of fillings in Configurations (I) and (II).}
		    \label{fig-ex-AB}
		\end{figure}
        Then, for Configurations (I) and (II), respectively, consider the coefficient of $y_{1,i_2}$ on both sides of (\ref{subdet}). Denote the corresponding coefficients on the left- and right-hand sides by $L_{y_{1,i_2}}$ and $R_{y_{1,i_2}}$, respectively. We have
        \[
L_{y_{i_1,j_2}}=\pm\det\left(Y^{C_{j_1}\backslash\{1\}}_{D_{j_1}\backslash\{i_2\}}\right)
\det\left(Y^{C_{j_2}}_{D_{j_2}}\right)\neq 0.
\]
        However, on the right-hand side, we have $1\notin C_{j_1}'$, $1\in C_{j_2}'$, and $i_2\notin D'_{j_2}$; hence the variable $y_{1,i_2}$ does not appear on the right-hand side, which implies $R_{y_{i_1,j_2}}=0$.


        For Configuration (III), suppose this structure occurs in rows $i_1,i_2,i_3$ and $i_4$ and in columns $j_1$ and $j_2$. 
		Let
		$$C_{j_1}=\{1,2,a_1,\ldots,a_{p}\},\qquad C_{j_2}=\{3,4,b_1,\ldots,b_q\},$$
		$$C'_{j_1}=\{1,3,a_1,\ldots,a_{p}\},\qquad C'_{j_2}=\{2,4,b_1,\ldots,b_q\},$$
        where $p=\left|D_{j_1}\right|-2,~q=\left|D_{j_2}-2\right|$.
        By a similar argument, consider the coefficient of $y_{1,i_1}y_{3,i_1}$ on both sides of (\ref{subdet}), and denote the corresponding coefficients by $L_{y_{1,i_1}y_{3,i_1}}$ and $R_{y_{1,i_1}y_{3,i_1}}$, respectively. There exist some fillings belonging to  $\mathcal{F}_D(C)$ with rows $i_1$ and $i_2$, as illustrated in Figure~\ref{fig-ex-C}. We have 
        \[ L_{y_{1,i_1}y_{3,i_1}}=\pm\det\left(Y^{C_{j_1}\backslash\{1\}}_{D_{j_1}\backslash\{i_1\}}\right)\cdot\det\left(Y^{C_{j_2}\backslash\{3\}}_{D_{j_2}\backslash\{i_1\}}\right)\neq 0.\]
        \begin{figure}[ht]
        \centering
        \begin{tikzpicture}
			\centering
			\draw[dashed, black] (-1.75,2) -- (1.75,2);
			\draw[dashed, black] (-1.75,1.5) -- (1.75,1.5);
			\draw[dashed, black] (-1.75,0) -- (1.75,0);
			\draw[dashed, black] (-1.75,0.5) -- (1.75,0.5);
			\draw[dashed, black] (-0.5,-0.5) -- (-0.5,2.5);
			\draw[dashed, black] (-1,-0.5) -- (-1,2.5);
			\draw[dashed, black] (1,-0.5) -- (1,2.5);
			\draw[dashed, black] (0.5,-0.5) -- (0.5,2.5);
            \node at (-2.25,1.75) {$i_1$};
            \node at (-2.25,0.25) {$i_2$};
            \node at (-0.75,2.85) {$j_1$};
            \node at (0.75,2.85) {$j_2$};
			\filldraw [lightgray] (-1,0) rectangle (-0.5,0.5);
			\draw[black] (-1,0) rectangle (-0.5,0.5);
			\filldraw [lightgray] (-1,1.5) rectangle (-0.5,2);
			\draw[black] (-1,1.5) rectangle (-0.5,2);
            \filldraw [lightgray] (0.5,1.5) rectangle (1,2);
            \draw[black] (0.5,1.5) rectangle (1,2);
            \filldraw [lightgray] (0.5,0) rectangle (1,0.5);
            \draw[black] (0.5,0) rectangle (1,0.5);
            \node at (-0.75,1.75) {1};
            \node at (-0.75,0.25) {2};
            \node at (0.75,1.75) {3};
            \node at (0.75,0.25) {4};
		\end{tikzpicture}
		    \caption{An instance of fillings in Configuration (III).}
		    \label{fig-ex-C}
		\end{figure}
        But for $C'$, element $1$ and $3$ are in the same set $C_{j_1}'$, so they cannot occurs in row $i_1$ simultaneously, which implies that $R_{y_{1,i_1}y_{3,i_1}}=0$.

        In all cases, the equality~\eqref{subdet} fails, which completes the proof. 
    \end{proof}

Then we can give the proof of Theorem~\ref{thm-lower} based on Theorem~\ref{thm-lower-equivalent-condition}.
	\begin{proof}[Proof of Theorem~\ref{thm-lower}]

    By Definition~\eqref{def-Weyl-module-matrix} and Proposition~\ref{prop-monomial-dual-Weyl}, the coefficient of a monomial $\mathbf{x}^a$ in $\chi_D(\mathbf{x})$ equals the dimension of
    \begin{equation*} 
        \operatorname{Span}_{\mathbb{C}}\left\{\det\left(Y_{D}^C\right) \;\middle|\;
        C=(C_1,\ldots,C_n),\ C_j\subseteq [n],\ |C_j|=|D_j|\text{ for }j\in [n],\text{ and } \mathbf{x}^C=\mathbf{x}^a\right\}
    \end{equation*} 
    Hence $\chi_D(\mathbf{x})$ is zero-one if and only if every nonzero weight space is one-dimensional.
    
    If the valid part of $D$ satisfies one of the conditions in Theorem~\ref{thm-lower-equivalent-condition}, then for any $C$ and $C'$ satisfying~\eqref{eq-lower-condition}, we have $\det\left(Y_D^C\right)=\det\left(Y_D^{C'}\right)$.
    Thus every nonzero weight space is one-dimensional, and $\chi_D(\mathbf{x})$ is zero-one.
    
    Conversely, if none of these conditions holds, the proof of Theorem~\ref{thm-lower-equivalent-condition} yields two diagrams $C$ and $C'$ satisfying~\eqref{eq-lower-condition} such that $\det\left(Y_D^C\right)$ and $\det\left(Y_D^{C'}\right)$ are not scalar multiples of each other. Therefore, the corresponding weight space has dimension at least two, and $\chi_D(\mathbf{x})$ is not zero-one.
	\end{proof}

    As a direct consequence of Theorem~\ref{thm-lower}, we obtain the following pattern-avoidance characterization of the Stanley symmetric functions that attain the lower bound, equivalently, whose monomial coefficients are all either $0$ or $1$.
    
\begin{cor}
The Stanley symmetric function $F_w$ attains the lower bound if and only if $w$ avoids the patterns
\[
321,\quad 2143,\quad 2413,\quad 3142,\quad 3412.
\]
\end{cor}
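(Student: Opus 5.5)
The plan is to apply Theorem~\ref{thm-lower} to the Rothe diagram $D(w)$, using $\chi_{D(w)}=s_{D(w)}=F_w$. This requires two things: an explicit description of the valid part of $D(w)$, and a translation of conditions (a)--(c) into statements about inversions via the bijection $(i,k)\mapsto (i,w_k)$.

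\emph{Step 1: identify the valid part.} Column $j$ of $D(w)$ is never full, because the square $(w^{-1}(j),j)$ is not a box: the condition $j<w_i$ fails there. So the valid part is just the set of nonempty columns of $D(w)$, each viewed inside all $n$ rows.

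\emph{Step 2: dispose of condition (c).} Suppose all empty grids of the valid part lie in one row. Then every nonempty column $j$ contains $n-1$ boxes. The number of boxes in column $j$ is $|\{i<w^{-1}(j): w_i>j\}|\le w^{-1}(j)-1$, so equality with $n-1$ forces $w^{-1}(j)=n$ and $w_i>j$ for all $i<n$, that is, $j=1=w_n$. Hence there is only one such column, and (c) is subsumed by (a).

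By Steps 1 and 2, $F_w$ attains the lower bound if and only if all inversions of $w$ share the same first index $i$ (condition (b)), or all share the same smaller value $w_k$ (condition (a)). The identity permutation is allowed trivially.

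\emph{Step 3: the two directions.} For necessity of avoidance, I would list the inversions of each forbidden pattern and check that they are neither all in one row nor all in one column:
\begin{itemize}
\item $321$ gives boxes in rows $1,2$ and columns $1,2$;
\item $2143$ gives boxes $(1,1),(3,3)$;
\item $2413$ gives boxes in rows $1,2$ and values $1,3$;
\item $3142$ gives boxes in rows $1,3$ and values $1,2$;
\item $3412$ gives boxes in rows $1,2$ and values $1,2$.
\end{itemize}
An occurrence of a pattern in $w$ produces corresponding inversions of $w$ with the same row and column distinctions. So containing any of these patterns violates both (a) and (b).

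Conversely, suppose the boxes of $D(w)$ lie neither in a single row nor in a single column. I first claim there are two boxes differing in both row and column. To see this, take two boxes $A,B$ in different rows. If they are also in different columns we are done. Otherwise pick a box $C$ outside that common column; then $C$ differs in row from at least one of $A$ and $B$, and also differs from it in column. This gives inversions $(i,k)$ and $(i',k')$ with $i\ne i'$ and $w_k\ne w_{k'}$.

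\emph{Step 4: the main obstacle.} Restricting $w$ to the at most four positions $\{i,k,i',k'\}$ gives a pattern $v\in\mathfrak S_3\cup\mathfrak S_4$ with the same property. This pattern cannot have size $2$, and if $k=k'$ the values would coincide, so the positions overlap in at most one place. It remains to verify that every such $v$ contains one of $321,2143,2413,3142,3412$. I expect this finite case check to be the main obstacle. I would organize it as follows.
\begin{itemize}
\item If $v$ contains $321$, we are done; so assume $v$ avoids $321$.
\item In $\mathfrak S_3$, the $321$-avoiding permutations have inversion sets lying in one row or one column. For example, $231$ has boxes $(1,1),(2,1)$ and $312$ has boxes $(1,1),(1,2)$. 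So the size-$3$ case is impossible.
\item In $\mathfrak S_4$, I would enumerate the $14$ permutations avoiding $321$ and check that those whose inversions are not collinear are exactly $2143,2413,3142,3412$. The others, such as $1342$, $2341$, $4123$, $1423$ and $2314$, should turn out to have all boxes in one row or one column.
\end{itemize}
This completes the equivalence.
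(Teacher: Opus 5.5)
Your proof is correct. Your forward direction (listing the boxes of each forbidden pattern) is essentially the paper's, but you justify the reduction and prove the converse differently.

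On the reduction: the paper simply asserts that for Rothe diagrams the criterion of Theorem~\ref{thm-lower} amounts to ``single row or single column.'' Your Steps~1 and~2 actually justify this. No column of $D(w)$ is full since $(w^{-1}(j),j)\notin D(w)$. A column with $n-1$ boxes forces $j=1=w_n$, so condition (c) collapses into (a).

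On the converse: the paper shows that avoidance makes the inversions pairwise intersecting as edges of the inversion graph. It then invokes the fact that such a family is a star or a triangle, and appeals to $321$-avoidance to exclude both the triangle and a star whose center is a left endpoint of some edges and a right endpoint of others. You instead extract two boxes differing in both row and column, and reduce to a pattern on three or four positions. The overlapping three-position case, which absorbs both the triangle and the mixed star, is settled by inspecting $\mathfrak S_3$. The disjoint case is settled by enumerating the fourteen $321$-avoiding permutations of $\mathfrak S_4$.

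That enumeration comes out as you predict: only $2143,2413,3142,3412$ have boxes not confined to one row or one column. It is exactly the finite check the paper leaves implicit in its sentence that the pattern induced by the four endpoints ``would either contain $321$ or be one of'' the four listed patterns. The paper's route is shorter and more structural; yours is more mechanical but fully self-contained.
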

\begin{proof}
    For a Rothe diagram, the lower-bound criterion is equivalent to requiring that $D(w)$ be contained in a single row or a single column. Under the correspondence
$$
(i,j)\in \operatorname{Inv}(w)
\longleftrightarrow
(i,w_j)\in D(w),
$$
this means that all inversions of $w$ have the same left endpoint or the same right endpoint.
If all inversions have a common endpoint, then it is clear that $w$ avoids $321$.
Moreover, each permutation in
$\{2143,\, 2413,\, 3142,\,3412\}$
contains two disjoint inversions.

Conversely, suppose that $w$ avoids these five patterns. Then no two inversions are disjoint; otherwise, the pattern induced by their four endpoints would either contain $321$ or be one of $\{2143,\, 2413,\, 3142,\,3412\}$.
Thus the edge set of the inversion graph of $w$ is pairwise intersecting. A pairwise intersecting family of edges is either a star or consists of the three edges of a triangle. Since a triangle in the inversion graph corresponds to a $321$-pattern, all inversions of $w$ share a common endpoint. This endpoint cannot be both a left and a right endpoint, because
$$
a<p<b,
\qquad
w_a>w_p>w_b
$$
would produce a $321$-pattern. Therefore, all inversions have the same left endpoint or the same right endpoint, completing the proof.
\end{proof}
	
	\section{Proof of Theorem~\ref{thm-upper}}\label{sec-upper} 
	
	In this section, we proceed to prove Theorem~\ref{thm-upper}. To this end, we first introduce an order of fillings, which will be used to identify a unique maximal filling in each determinant expansion. 

    For a filling $F$, let ${\rm fix}_F$ denote the number of fixed points, where the element $i$ is called a fixed point if it is filled exactly in the $i$-th row, and for the remaining elements, let $f_i$ denote the total sum of entries filled in the $i$-th row.
	Define the weight of $F$ by
	$$
	\mathrm{wt}(F)=(f_1,\ldots,f_n).
	$$	
	Then we can define the rule of the filling order as follows.
	\begin{itemize}
		\item If ${\rm fix}_{F}<{\rm fix}_{F'}$, then we have $F<F'$.
		\item If ${\rm fix}_{F}$=${\rm fix}_{F'}$, and $\mathrm{wt}(F)<\mathrm{wt}(F')$ in lexicographical order, then we have $F<F'$.
	\end{itemize}


    Note that in the expansion of $\det(Y^C_D)=\sum_{F\in \mathcal{F}_D(C) }{\rm sgn}(F) y^{F}$, for a filling $F$, if there exists a distinct filling $F'$ such that $y^F = y^{F'}$ and $\operatorname{sgn}(F) = -\operatorname{sgn}(F')$, then we say that $y^F$ can be canceled in the determinant $\det(Y^C_D)$.
    For example, as illustrated in Figure~\ref{fig:exam-cancel}, let $R_i^1$ and $R_i^{2}$ be the multisets consisting of the entries in the $i$-th row of $F^{(1)}$ and $F^{(2)}$, respectively. It is clear that $y^{F^{(1)}}=y^{F^{(2)}}$, since $R_i^1=R_i^2$ for all $i\in[5]$. By the definition in \eqref{eq-inv}, we have ${\rm inv}(F^{(1)})=0+2+2=4$ and ${\rm inv}(F^{(2)})=3+1+1=5$, which yields ${\rm sgn}(F^{(1)})=1$ and ${\rm sgn}(F^{(2)})=-1$. Thus the two terms $y^{F^{(1)}}$ and $y^{F^{(2)}}$ cancel each other.
      \begin{figure}[ht]
    \centering
    \begin{minipage}[b]{0.3\textwidth}
                \centering
        \begin{tikzpicture}
            \filldraw [lightgray] (0,0.5) rectangle (1.5,1.5);
            \filldraw [lightgray] (0,-0.5) rectangle (1.5,-1);
            \draw[black] (1+0.5,0+1.5) rectangle (1,0.5);
            \draw[black] (1+0.5,0) rectangle (1,0.5);
            \draw[black] (1+0.5,0+1) rectangle (1,0.5);
            \draw[black] (1+0.5,0-0.5) rectangle (1,0.5-0.5);
            \draw[black] (0,0) rectangle (0.5,0.5);
            \draw[black] (0.5,0) rectangle (1,0.5);
            \draw[black] (0,0.5) rectangle (0.5,1);
            \draw[black] (0.5,0.5) rectangle (1,1);
            \draw[black] (0,0.5) rectangle (0.5,1);
            \draw[black] (0,1) rectangle (0.5,1.5);
            \draw[black] (0.5,1) rectangle (1,1.5);
            \draw[black] (0.5,-0.5) rectangle (1,0.5);
            \draw[black] (0,-0.5) rectangle (0.5,0);
            \draw[black] (0,-0.5) rectangle (0.5,-1);
            \draw[black] (1,-0.5) rectangle (0.5,-1);
            \draw[black] (1,-0.5) rectangle (1.5,-1);
            \draw[black] (0.5,0.5) -- (1,0);
            \draw[black] (0,-0.5) -- (0.5,-0.5);
            \draw[black] (0,0) -- (0.5,-0.5);
            \draw[black] (0,-0.5) -- (0.5,0);
            \draw[black] (0.5,-0.5) -- (1,0);
            \draw[black] (0.5,0) -- (1,-0.5);
            \draw[black] (0,0) -- (0.5,0.5);
            \draw[black] (0,0.5) -- (0.5,0);
            \draw[black] (0.5,0) -- (1,0.5);
            \draw[black] (0+1,0) -- (0.5+1,-0.5);
            \draw[black] (0+1,-0.5) -- (0.5+1,0);
            \draw[black] (0+1,0.5) -- (0.5+1,-0.5+0.5);
            \draw[black] (0+1,-0.5+0.5) -- (0.5+1,0.5);
            \node at (0.25,0.75+.5) {2};
            \node at (0.25+.5,0.75+.5) {3};
            \node at (0.25+1,0.75+.5) {5};
            \node at (0.25,0.75) {3};
            \node at (0.25+.5,0.75) {5};
            \node at (0.25+1,0.75) {2};
            \node at (0.25,0.75-1.5) {5};
            \node at (0.25+.5,0.75-1.5) {2};
            \node at (0.25+1,0.75-1.5) {3};
            \node at (0.75,-1.5) {$F^{(1)}$};
        \end{tikzpicture}
    \end{minipage}
    \begin{minipage}[b]{0.3\textwidth}
                \centering
        \begin{tikzpicture}
            \filldraw [lightgray] (0,0.5) rectangle (1.5,1.5);
            \filldraw [lightgray] (0,-0.5) rectangle (1.5,-1);
            \draw[black] (1+0.5,0+1.5) rectangle (1,0.5);
            \draw[black] (1+0.5,0) rectangle (1,0.5);
            \draw[black] (1+0.5,0+1) rectangle (1,0.5);
            \draw[black] (1+0.5,0-0.5) rectangle (1,0.5-0.5);
            \draw[black] (0,0) rectangle (0.5,0.5);
            \draw[black] (0.5,0) rectangle (1,0.5);
            \draw[black] (0,0.5) rectangle (0.5,1);
            \draw[black] (0.5,0.5) rectangle (1,1);
            \draw[black] (0,0.5) rectangle (0.5,1);
            \draw[black] (0,1) rectangle (0.5,1.5);
            \draw[black] (0.5,1) rectangle (1,1.5);
            \draw[black] (0.5,-0.5) rectangle (1,0.5);
            \draw[black] (0,-0.5) rectangle (0.5,0);
            \draw[black] (0,-0.5) rectangle (0.5,-1);
            \draw[black] (1,-0.5) rectangle (0.5,-1);
            \draw[black] (1,-0.5) rectangle (1.5,-1);
            \draw[black] (0.5,0.5) -- (1,0);
            \draw[black] (0,-0.5) -- (0.5,-0.5);
            \draw[black] (0,0) -- (0.5,-0.5);
            \draw[black] (0,-0.5) -- (0.5,0);
            \draw[black] (0.5,-0.5) -- (1,0);
            \draw[black] (0.5,0) -- (1,-0.5);
            \draw[black] (0,0) -- (0.5,0.5);
            \draw[black] (0,0.5) -- (0.5,0);
            \draw[black] (0.5,0) -- (1,0.5);
            \draw[black] (0+1,0) -- (0.5+1,-0.5);
            \draw[black] (0+1,-0.5) -- (0.5+1,0);
            \draw[black] (0+1,0.5) -- (0.5+1,-0.5+0.5);
            \draw[black] (0+1,-0.5+0.5) -- (0.5+1,0.5);
            \node at (0.25,0.75+.5) {5};
            \node at (0.25+.5,0.75+.5) {3};
            \node at (0.25+1,0.75+.5) {2};
            \node at (0.25,0.75) {3};
            \node at (0.25+.5,0.75) {2};
            \node at (0.25+1,0.75) {5};
            \node at (0.25,0.75-1.5) {2};
            \node at (0.25+.5,0.75-1.5) {5};
            \node at (0.25+1,0.75-1.5) {3};
            \node at (0.75,-1.5) {$F^{(2)}$};
        \end{tikzpicture}
    \end{minipage}
    \caption{Two fillings can cancel each other in~\eqref{eq-filling-det}, i.e., $y^{F^{(1)}}=-y^{F^{(2)}}$.}
    \label{fig:exam-cancel}
    \end{figure}

    Next we consider the maximum filling in this order as follows.

    \begin{lem}\label{exist}
		Given two diagrams $D=(D_1,D_2,\ldots,D_n)$ and $C=(C_1,C_2,\ldots,C_n)$ of $[n]^2$ with $|C_j|=|D_j|$ for each $j\in[n]$, there exists a unique filling $F_{led}\in\mathcal{F}_D(C)$, which is maximum in the filling order, i.e., $F_{led}\ge F$ for any $F\in\mathcal{F}_{D}(C)$. Moreover, let $y^{F_{led}}$ denote the monomial corresponding to $F_{led}$, then this term cannot be canceled in the determinant $\det\left(Y^C_D\right)$.
	\end{lem}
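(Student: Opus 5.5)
The plan is to construct $F_{led}$ explicitly, column by column, and then show that both the statistic ${\rm fix}_F$ and the weight $\mathrm{wt}(F)$ depend only on the monomial $y^F$. Uniqueness of the maximum then immediately rules out cancellation.

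\emph{Step 1: maximizing fixed points.} Since a filling in $\mathcal{F}_D(C)$ is an independent choice, in each column $j$, of a bijection $D_j\to C_j$, we have
$${\rm fix}_F=\sum_j \bigl|\{i\in D_j: c_{ij}=i\}\bigr| \le \sum_j |C_j\cap D_j|.$$
Equality holds if and only if, in every column $j$, each element $i\in C_j\cap D_j$ is placed in row $i$. So the maximal fillings in the first criterion are exactly those that fix all of $C_j\cap D_j$. In such a filling, each column places the ``free'' entries $C_j\setminus D_j$ bijectively into the ``free'' rows $D_j\setminus C_j$. These two sets have the same size because $|C_j|=|D_j|$.

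\emph{Step 2: lexicographic maximization of $\mathrm{wt}$.} Among the fillings from Step 1, $f_i$ is the sum over columns $j$ with $i\in D_j\setminus C_j$ of the free entry placed at $(i,j)$. Since the columns are independent, $f_1$ is maximized if and only if, in every column with $1$ a free row, the largest free entry goes to row $1$. If some column did otherwise, swapping entries within that column alone would strictly increase $f_1$ and keep ${\rm fix}$ unchanged. Fixing this, one inducts on $i$. The conclusion is that the unique lexicographic maximum is the filling $F_{led}$ in which, in each column, the free rows in increasing order receive the free entries in decreasing order.

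The point needing care is that a non-fixed entry cannot accidentally equal its row index. This holds because free entries lie outside $D_j$ while free rows lie in $D_j$. With that checked, we obtain the stronger statement we need: every $F$ with ${\rm fix}_F={\rm fix}_{F_{led}}$ and $\mathrm{wt}(F)=\mathrm{wt}(F_{led})$ equals $F_{led}$. This gives existence and uniqueness of the maximum.

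\emph{Step 3: non-cancellation.} Suppose $F'\in\mathcal{F}_D(C)$ satisfies $y^{F'}=y^{F_{led}}$. Then for every row $i$, the multiset $R_i$ of entries in row $i$ is the same for $F'$ and $F_{led}$. But ${\rm fix}_F=\sum_i m_i(i)$, where $m_i(i)$ is the multiplicity of $i$ in $R_i$. Likewise
$$f_i=\Bigl(\textstyle\sum_{x\in R_i}x\Bigr)-i\cdot m_i(i).$$
Both are therefore functions of the row multisets alone, so $F'$ has the same ${\rm fix}$ and $\mathrm{wt}$ as $F_{led}$. By Step 2, $F'=F_{led}$. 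Hence, by Lemma~\ref{lem-det-y^F}, the monomial $y^{F_{led}}$ occurs exactly once in $\det(Y^C_D)$, with coefficient ${\rm sgn}(F_{led})=\pm1$, and cannot be canceled.

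I expect the main obstacle to be Step 2: the per-column exchange argument showing that lexicographic maximality forces the decreasing arrangement in every column simultaneously. This requires that changing one column affects only the $f_i$ for rows in that column while leaving ${\rm fix}$ intact.
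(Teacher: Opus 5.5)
Your proposal is correct and follows essentially the same route as the paper. You force every element of $C_j\cap D_j$ to be fixed, use a row-by-row exchange argument to show that the decreasing arrangement of the free entries is the unique lexicographic maximum, and deduce non-cancellation because $y^{F'}=y^{F_{led}}$ forces equal row multisets, hence equal ${\rm fix}$ and $\mathrm{wt}$, hence $F'=F_{led}$. Your explicit formulas ${\rm fix}_F=\sum_i m_i(i)$ and $f_i=\sum_{x\in R_i}x-i\,m_i(i)$ make this last step slightly tighter than the paper's version, which argues through the counts of the factors $y_{ii}$.
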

	\begin{proof}
		The filling $F_{led}$ can be constructed as follows. First, fill the element $i$ in the $(i,j)$-th box for each $i\in C_j\cap D_j$. This maximizes the number of fixed points.
		Then, for each $j\in[n]$, from the top to the bottom, we fill the remaining elements of $C_j$ in the remaining rows corresponding to the elements of $D_j$ in decreasing order. 
		  
		Now we claim that $F_{led}$ is unique and maximum in the filling order. Consider a filling $F\ge F_{led}$ in $\mathcal{F}_D(C)$. By the construction of $F_{led}$, we have that ${\rm fix}_{F_{led}}$ is as large as possible. So we have to fill each element $i\in C_j\cap D_j$ in the box $(i,j)$ in the same way as $F_{led}$ does, which means that ${\rm fix}_F={\rm fix}_{led}$.
		Then, in $F$, we need to consider the filling of the remaining elements. Without loss of generality, for a fixed column $j$, assume that the first row of $F$ is nonempty. 
        To obtain a filling with larger weight, according to the construction process of $F_{led}$, we fill in the box $(i,j)$ with the largest remaining element in $C_j$, so that the elements in $F$ cannot be larger than those in $F_{led}$. Continue this process until all boxes are filled, and we can finally conclude that $F=F_{led}$.

        To prove that the leading monomial $y^{F_{led}}$ cannot be canceled, we proceed by contradiction. Assume that there exists a filling $F'$ such that $y^{F'} = -y^{F_{\text{led}}}$. 
        Then the number of factors $y_{ii}$ in both monomial $y^{F'}$ and $y^{F_{led}}$ are equal for all $i$, and therefore in $F'$, each element $i\in C_j\cap D_j$ is also filled in the $(i,j)$-th box for all possible $j$ according to the construction of $F_{led}$. This implies that ${\rm fix}_{F'}={\rm fix}_{F_{led}}$. 
        Let $R^i$ and $R^i_{led}$ be the multisets consisting of entries on the $i$-th row of $F'$ and $F_{led}$ respectively. Since each entry $a$ in row $i$ corresponds to the indeterminates $y_{a,i}$ in both $y^{F'}$ and $y^{F_{led}}$, the condition of $y^{F'}=y^{F_{led}}$ ensures that $R^i=R^i_{led}$ for all $i$, which leads to $\mathrm{wt}(F')=\mathrm{wt}(F_{led})$. Figure~\ref{fig:same-wt} gives an example with $D = (\{2,4\}, \{2,4\})$ and $C = (\{4,5\}, \{3,4\})$, showing a leading filling and a distinct filling having the same weight.  
        However, when ${\rm fix}(F_{led})={\rm fix}(F')$ is maximal, the weights of $F_{led}$ and $F'$ are equal only if $F_{led}=F'$.
        That is, by the construction process of $F_{led}$, the non-fixed entries of $F'$ must also be filled decreasingly from the top to the bottom as mentioned before.   
        In this way, we obtain that $F'=F_{led}$, which leads to a contradiction. This completes the proof.      	
        \end{proof}

 \begin{figure}[ht]
         \centering
         \begin{minipage}[b]{0.3\textwidth}
         	\centering
		\begin{tikzpicture}
			\draw[black] (0,-1.5) rectangle (0.5,-1);
            \draw[black] (0.5,-1.5) rectangle (1,-1);
            \filldraw[lightgray] (0,-1) rectangle (0.5,-0.5);
            \filldraw[lightgray] (0,0) rectangle (0.5,0.5);
            \filldraw[lightgray] (0.5,0) rectangle (1,0.5);
            \filldraw[lightgray] (0.5,-1) rectangle (1,-0.5);
            \draw[black] (0.5,-1) rectangle (1,-0.5);
            \draw[black] (0,-1) rectangle (0.5,-0.5);
            \draw[black] (0,-0.5) rectangle (0.5,0);            
            \draw[black] (0.5,-0.5) rectangle (1,0);
			\draw[black] (0,0) rectangle (0.5,0.5);
            \draw[black] (0.5,0) rectangle (1,0.5);
            \draw[black] (0.5,0.5) rectangle (1,1);
            \draw[black] (0,0.5) rectangle (0.5,1);
            \draw[black] (0,-1.5) -- (0.5,-1);
            \draw[black] (0,-1) -- (0.5,-1.5);
            \draw[black] (0,-0.5) -- (0.5,0);
            \draw[black] (0,0) -- (0.5,-0.5);
            \draw[black] (0.5,-1.5) -- (1,-1);
            \draw[black] (0.5,-1) -- (1,-1.5);
            \draw[black] (0.5,-0.5) -- (1,0);
            \draw[black] (0.5,0) -- (1,-0.5);
            \draw[black] (0.5,-0.5+1.5) -- (1,0+0.5);
            \draw[black] (0.5,0.5) -- (1,-0.5+1.5);
            \draw[black] (-0.5+0.5,-0.5+1.5) -- (1-0.5,0+0.5);
            \draw[black] (-0.5+0.5,0.5) -- (1-.5,-0.5+1.5);
            \node at (0.75,0.25) {3};
            \node at (0.75,-0.75) {4};
            \node at (0.25,0.25) {5};
            \node at (0.25,-0.75) {4};
            \node at (0.5,-2) {$F_{led}$};
		\end{tikzpicture}
        \end{minipage}\hspace{0.02\textwidth}
        \hspace{0.05\textwidth}
        \begin{minipage}[b]{0.3\textwidth}
        \centering
        \begin{tikzpicture}
			\draw[black] (0,-1.5) rectangle (0.5,-1);
            \draw[black] (0.5,-1.5) rectangle (1,-1);
            \filldraw[lightgray] (0,-1) rectangle (0.5,-0.5);
            \filldraw[lightgray] (0,0) rectangle (0.5,0.5);
            \filldraw[lightgray] (0.5,0) rectangle (1,0.5);
            \filldraw[lightgray] (0.5,-1) rectangle (1,-0.5);
            \draw[black] (0.5,-1) rectangle (1,-0.5);
            \draw[black] (0,-1) rectangle (0.5,-0.5);
            \draw[black] (0,-0.5) rectangle (0.5,0);            
            \draw[black] (0.5,-0.5) rectangle (1,0);
			\draw[black] (0,0) rectangle (0.5,0.5);
            \draw[black] (0.5,0) rectangle (1,0.5);
            \draw[black] (0.5,0.5) rectangle (1,1);
            \draw[black] (0,0.5) rectangle (0.5,1);
            \draw[black] (0,-1.5) -- (0.5,-1);
            \draw[black] (0,-1) -- (0.5,-1.5);
            \draw[black] (0,-0.5) -- (0.5,0);
            \draw[black] (0,0) -- (0.5,-0.5);
            \draw[black] (0.5,-1.5) -- (1,-1);
            \draw[black] (0.5,-1) -- (1,-1.5);
            \draw[black] (0.5,-0.5) -- (1,0);
            \draw[black] (0.5,0) -- (1,-0.5);
            \draw[black] (0.5,-0.5+1.5) -- (1,0+0.5);
            \draw[black] (0.5,0.5) -- (1,-0.5+1.5);
            \draw[black] (-0.5+0.5,-0.5+1.5) -- (1-0.5,0+0.5);
            \draw[black] (-0.5+0.5,0.5) -- (1-.5,-0.5+1.5);
            \node at (0.75,0.25) {4};
            \node at (0.75,-0.75) {3};
            \node at (0.25,0.25) {4};
            \node at (0.25,-0.75) {5};
            \node at (0.5,-2) {$F'$};
		\end{tikzpicture}
        \end{minipage}
         \caption{A leading filling and another filling having the same weight.}
         \label{fig:same-wt}
     \end{figure}

   For the maximum fillings illustrated above, we give the following statement of the corresponding leading monomials.
	\begin{lem}\label{leading}
		Let $D$ be a diagram that avoids the configurations in Figure~\ref{fig-upper} and let $C^{(1)},\ldots,C^{(m)}$ be all the diagrams $C=(C_1,C_2,\ldots,C_n)$ with $|C_j|=|D_j|$ for $j\in[n]$ and ${\bf x}^{C}={\bf x}^a$, where ${\bf x}^a$ is a monomial in $\chi_{D}({\bf x})$. For simplicity, write ${\bf y}^{(a)}={y}^{F_{led}^{C^{(a)}}}$  Then we have the following results:
		\begin{enumerate}
			\item[(a)] For $1\leq a \leq m$, the leading monomial ${\bf y}^{(a)}$ appears in the polynomial $\det\left(Y_D^{C^{(a)}}\right)$.
			
			\item[(b)] The leading monomials $\{{\bf y}^{(a)}\}$ are distinct.
		\end{enumerate}
	\end{lem}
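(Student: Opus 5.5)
Part (a) follows directly from Lemma~\ref{exist}. For each $a$, the filling $F_{led}^{C^{(a)}}$ lies in $\mathcal{F}_D(C^{(a)})$, so by Lemma~\ref{lem-det-y^F} it contributes $\pm{\bf y}^{(a)}$ to $\det(Y_D^{C^{(a)}})$. Lemma~\ref{exist} states that this term cannot be canceled by any other filling of $\mathcal{F}_D(C^{(a)})$. Its coefficient is therefore nonzero, and ${\bf y}^{(a)}$ appears in $\det(Y_D^{C^{(a)}})$. The avoidance hypothesis is not needed for this part.

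Part (b) carries the content, and I would argue by contradiction. Suppose ${\bf y}^{(a)}={\bf y}^{(b)}$ with $C^{(a)}\neq C^{(b)}$. Write $F=F_{led}^{C^{(a)}}$ and $G=F_{led}^{C^{(b)}}$. Equal monomials mean that for every row $i$, the multiset of entries in row $i$ of $F$ equals that of $G$. The diagonal variables $y_{i,i}$ then agree, which should force the fixed entries to agree row by row. Since $C^{(a)}\ne C^{(b)}$, some column $j$ has $C^{(a)}_j\neq C^{(b)}_j$. Choose an entry $c$ that sits in row $i_1$ and column $j_1$ of $F$ but not in column $j_1$ of $G$. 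Because row $i_1$ has the same multiset in both fillings, $G$ must place $c$ in row $i_1$ at some other column $j_2\ne j_1$. Row balance in $G$ then requires another entry $c'\neq c$ that $F$ places at $(i_1,j_2)$ and $G$ places at $(i_1,j_1)$, or else a longer exchange cycle. So rows $i_1$ and columns $j_1,j_2$ are all boxes of $D$, and the two fillings differ by a swap of entries along row $i_1$.

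Next I would use the column-wise greedy structure of the leading filling. In each column, the non-fixed entries are placed in decreasing order from top to bottom, and $i$ is placed at $(i,j)$ whenever $i\in C_j\cap D_j$. In column $j_1$, the set $C^{(a)}_{j_1}$ contains $c$ but not $c'$, while $C^{(b)}_{j_1}$ contains $c'$ but not $c$. I would track where the greedy ordering places the remaining entries of the two columns, and look for a second row $i_2$ that witnesses the difference. The goal is to show either a contradiction, meaning the swap is incompatible with both fillings being greedy, or that $D$ has the shape of one of the configurations in Figure~\ref{fig-upper}. Those configurations have boxes at $(i_1,j_1),(i_1,j_2)$ with $(i_2,j_1),(i_2,j_2)$ empty, or the mirror image with rows $i_1<i_2$ interchanged. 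For example, suppose $D_{j_1}$ and $D_{j_2}$ agree on all rows below $i_1$ (resp.\ above). Then the greedy rule places $c$ and $c'$ at the same relative height in both columns, and the swap forces $C^{(a)}=C^{(b)}$. So some row $i_2$ must distinguish the two columns. A row where both columns have boxes gives no contradiction, so the witnessing row is one where both are empty, which is exactly a forbidden configuration.

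I expect the main obstacle to be this last step, and in particular the interaction with fixed points. An entry $c$ may be fixed in one filling and not in the other. Also, a mismatch can involve a longer cycle of exchanges across several columns rather than a simple swap. I would first reduce to a minimal counterexample by choosing the differing entry $c$ that is the largest, or equivalently the topmost in the greedy order. With this choice I would argue that the earliest discrepancy produces a $2\times 2$ pattern. The orientation of the empty pair relative to $i_1$, above or below, distinguishes the two pictures in Figure~\ref{fig-upper}.
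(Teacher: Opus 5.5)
Part (a) is fine, and your setup for (b) is correct up to the point where an entry $c$ sits at $(i_1,j_1)$ in $F$ and at $(i_1,j_2)$ in $G$ with $j_1\neq j_2$, so that $(i_1,j_1),(i_1,j_2)\in D$. After that, however, the decisive step is missing, so (b) is not proved.

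You try to find a second row $i_2$ through the decreasing top-to-bottom placement of non-fixed entries, exchange cycles and a minimal counterexample, and you concede that this is the obstacle. Several of the intermediate claims are also unjustified:
\begin{itemize}
\item that $c'\notin C^{(a)}_{j_1}$;
\item that agreement of $D_{j_1},D_{j_2}$ below $i_1$ forces $C^{(a)}=C^{(b)}$, which ignores the other columns;
\item that a row where both columns contain boxes "gives no contradiction" (for the right row it does).
\end{itemize}

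The missing idea is that the witnessing row is the row indexed by the misplaced entry itself. The contradiction comes from the fixed-point step of the construction of $F_{led}$; the decreasing order plays no role.

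First arrange that the entry differs from its row index. Your $c$ may fail this, since $c=i_1$ is possible, with $c$ fixed in both fillings but in different columns. Instead, pick a row $i$ on which $F$ and $G$ differ and a column where their entries in row $i$ differ, and take the entry $k\neq i$ among the two. By equality of row multisets, $k$ sits at $(i,j)$ in one filling and at $(i,j')$ with $j'\neq j$ in the other, so $(i,j),(i,j')\in D$.

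Since $k\neq i$ and $D$ avoids Figure~\ref{fig-upper}, the boxes $(k,j)$ and $(k,j')$ are not both empty. Which picture would otherwise occur depends only on whether $k<i$ or $k>i$. Suppose $(k,j)\in D$, and let $C$ be the diagram of the filling having $k$ at $(i,j)$. Then $k\in C_j\cap D_j$, so $F_{led}$ places $k$ at $(k,j)$. Since $k$ occurs only once in column $j$, it cannot also sit at $(i,j)$. The case $(k,j')\in D$ is identical, using the other filling. This is the paper's argument, and it closes (b) in a few lines without cycle analysis or minimality.
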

	
	\begin{proof}
		Item (a) follows from the fact that ${\bf y}^{(a)}$ cannot be canceled due to Lemma~\ref{exist}.
        For item (b), we proceed by contradiction. Assume that there exist $1\le a\neq b\le m$ such that ${\bf y}^{(a)}={\bf y}^{(b)}$. In this case, the multiset of entries in row $i$ of the corresponding fillings $F_{\text{led}}^{(a)}$ and $F_{\text{led}}^{(b)}$ must be identical. Now consider any \(1 \leq i \leq n\) such that the \(i\)-th row of \(F_{\mathrm{led}}^{(a)}\) differs from the \(i\)-th row of \(F_{\mathrm{led}}^{(b)}\). Then there exists an element \(k \neq i\) whose column position in the \(i\)-th row is different in \(F_{\mathrm{led}}^{(a)}\) and \(F_{\mathrm{led}}^{(b)}\). In other words, there exist distinct columns \(j \neq j'\) such that \(k\) appears in column \(j\) of the \(i\)-th row of \(F_{\mathrm{led}}^{(a)}\) and in column \(j'\) of the \(i\)-th row of \(F_{\mathrm{led}}^{(b)}\). 

        To avoid the configurations in Figure~\ref{fig-upper}, at least one of the boxes $(k,j)$ or $(k,j')$ must lie in the diagram $D$, and we may assume that the box $(k,j)$ exists. By the construction of the leading filling, we place the entry $k$ in the $k$-th row if $k\in C_j\cap D_j$, so the entry $k$ must be filled in the box $(k,j)$ with $k\neq i$. This yields a contradiction and completes the proof.
	\end{proof}
	
	Now we are in a position to give the proof of Theorem~\ref{thm-upper}.

	\begin{proof}[Proof of Theorem~\ref{thm-upper}]
		The character $\chi_D({\bf x})$ attains the upper bound of \eqref{eq-ineq-x} if and only if the generators of the module $\mathbb{W}_D$ are linearly independent. 
        For the sufficiency,  we want to show that if $D$ avoids the configurations in Figure~\ref{fig-upper}, then the generators in $\{\det(Y^{C^{(i)}}_D)\}$ are linearly independent. Following the idea in the proof of \cite[Theorem 2.5]{PengLinSun2024} and \cite[Thm 2.5.8]{2001The}, consider the following linear relation
		$$c_1\det\left(Y^{C^{(1)}}_{D}\right)+c_2\det\left(Y^{C^{(2)}}_{D}\right)+\cdots+c_m\det\left(Y^{C^{(m)}}_{D}\right)=0.
		$$
		Without loss of generality, we choose ${\bf y}^{(1)}$ as a maximal monomial among the leading monomials $\{y^{(a)}\}$ with respect to the order of fillings. By Lemma~\ref{leading} and the maximality of $y^{(1)}$, we can deduce that ${\bf y}^{(1)}$ only appears in $\det\left(Y^{C^{(1)}}_{D}\right)$, so that $c_1=0$. Continuing this process, we can conclude that $c_i=0$ for all $2\leq i\le m$. Thus the generators are linearly independent.
		
		Conversely, assume that the diagram $D$ contains at least one configuration in Figure~\ref{fig-upper}. Since Schur characters are stable under the column permutations, we can only consider the case where those two columns containing this configuration are adjacent; denote them by $j_0$ and $j_0+1$. For each row in these two columns, there are four possible configurations, as illustrated in Figure~\ref{fig-ex-two-columns}.

        \begin{figure}[ht]
        \centering
        \begin{minipage}[b]{0.2\textwidth}
        \centering
        \begin{tikzpicture}
			\filldraw [lightgray] (0.5,0) rectangle (1,0.5);
			\draw[black] (0.5,0) rectangle (1,0.5);
            \draw[black] (0,0) rectangle (0.5,0.5);
            \draw[black] (0,0) -- (0.5,0.5);
			\draw[black] (0,0.5) -- (0.5,0);
            \node at (0.5,-0.75) {(I)};
		\end{tikzpicture}
        \end{minipage}
        \begin{minipage}[b]{0.2\textwidth}
        \centering
        \begin{tikzpicture}
			\filldraw [lightgray] (0.5,0) rectangle (1,0.5);
			\draw[black] (0.5,0) rectangle (1,0.5);
            \filldraw [lightgray] (0,0) rectangle (0.5,0.5);
            \draw[black] (0,0) rectangle (0.5,0.5);
            \node at (0.5,-0.75) {(II)};
		\end{tikzpicture}
        \end{minipage}
        \begin{minipage}[b]{0.2\textwidth}
        \centering
        \begin{tikzpicture}
			\filldraw [lightgray] (0,0) rectangle (0.5,0.5);
			\draw[black] (0.5,0) rectangle (1,0.5);
            \draw[black] (0,0) rectangle (0.5,0.5);
            \draw[black] (0.5,0) -- (1,0.5);
			\draw[black] (0.5,0.5) -- (1,0);
            \node at (0.5,-0.75) {(III)};
		\end{tikzpicture}
        \end{minipage}
        \begin{minipage}[b]{0.2\textwidth}
        \centering
        \begin{tikzpicture}
			\draw[black] (0.5,0) rectangle (1,0.5);
            \draw[black] (0,0) rectangle (0.5,0.5);
            \draw[black] (0,0) -- (0.5,0.5);
			\draw[black] (0,0.5) -- (0.5,0);
            \draw[black] (0.5,0) -- (1,0.5);
			\draw[black] (0.5,0.5) -- (1,0);
            \node at (0.5,-0.75) {(IV)};
		\end{tikzpicture}
        \end{minipage}
		    \caption{Four possible parts in two adjacent columns.}
		    \label{fig-ex-two-columns}
		\end{figure}
        
        We may further permute the rows so that all rows of Configuration~(I) appear at the top, followed in order by rows of Configuration~(II), then (IV), with rows of Configuration~(III) at the bottom. After a suitable row permutation (and, since Schur characters are stable under column permutations, with the two columns taken adjacent), each column of the two-column subdiagram occupies a consecutive set of rows; thus it can be viewed as a skew Young diagram $\lambda/\mu$. See Figure~4.4 for an example.
        
  \begin{figure}[ht]
         \centering
		\begin{tikzpicture}
			\centering
			\draw[black] (0,-1.5) rectangle (0.5,-1);
            \draw[black] (0.5,-1.5) rectangle (1,-1);
            \draw[black] (0.5,-1) rectangle (1,-0.5);
            \filldraw[lightgray] (0,-1) rectangle (0.5,-0.5);
            \filldraw[lightgray] (0,-0.5) rectangle (0.5,0);
            \filldraw[lightgray] (0.5,-0.5) rectangle (1,0);
            \filldraw[lightgray] (0,0) rectangle (0.5,0.5);
            \filldraw[lightgray] (0.5,0) rectangle (1,0.5);
            \filldraw[lightgray] (0.5,0.5) rectangle (1,1);
            \filldraw[lightgray] (0.5,1) rectangle (1,1.5);
            \draw[black] (0,-1) rectangle (0.5,-0.5);
            \draw[black] (0,-0.5) rectangle (0.5,0);            
            \draw[black] (0.5,-0.5) rectangle (1,0);
			\draw[black] (0,0) rectangle (0.5,0.5);
            \draw[black] (0.5,0) rectangle (1,0.5);
            \draw[black] (0.5,0.5) rectangle (1,1);
            \draw[black] (0.5,1) rectangle (1,1.5);
            \draw[black] (0,0.5) rectangle (0.5,1);
            \draw[black] (0,1) rectangle (0.5,1.5);
            \draw[black] (0,-1.5) -- (0.5,-1);
            \draw[black] (0,-1) -- (0.5,-1.5);
            \draw[black] (0,0.5) -- (0.5,1);
            \draw[black] (0,1) -- (0.5,0.5);
            \draw[black] (0,1) -- (0.5,1.5);
            \draw[black] (0,1.5) -- (0.5,1);
            \draw[black] (0.5,-1.5) -- (1,-1);
            \draw[black] (0.5,-1) -- (1,-1.5);
            \draw[black] (0.5,-1) -- (1,-0.5);
            \draw[black] (0.5,-0.5) -- (1,-1);
		\end{tikzpicture}
         \caption{An instance of the combination of four configurations in Figure \ref{fig-ex-two-columns}.}
         \label{fig:exam-combination}
     \end{figure}

		
		Next consider the family $\mathcal{C}$ of diagrams $C$ satisfying that
		\begin{itemize}
			\item $|C_j|=|D_j|$ for $1\le j\le n$;
			\item $C_j=D_j$ for $j\neq j_0,j_0+1$.
		\end{itemize}
		Then we have
		\begin{align}\label{det}
			\det\left(Y^C_D\right)=\det\left(Y_{D_{j_0}}^{C_{j_0}}\right)\cdot \det\left(Y_{D_{j_0+1}}^{C_{j_0+1}}\right)\cdot \prod_{j\neq j_0,j_0+1}\det\left( Y_{D_j}^{C_j} \right)
		\end{align}
		with the last term fixed. 
        Consequently, the generators in $\left\{\det\left(Y^C_D\right)\right\}$ are linearly dependent if and only if the generators in
        \[\left\{\det\left(Y_{D(j_0,j_0+1)}^{C(j_0,j_0+1)}\right)=\det\left(Y_{D_{j_0}}^{C_{j_0}}\right)\cdot \det\left(Y_{D_{j_0+1}}^{C_{j_0+1}}\right)\right\}\]
        are linearly dependent, where $D(i,j)$ denotes the subdiagram consisting of the $i$-th and $j$-th columns of a diagram $D$. So $\chi_D$ attains the upper bound only if $\chi_{D(j_0,j_0+1)}$ attains the upper bound.
    
        For a diagram  $D'=(D'_1,\ldots,D'_n)$, by the upper bound in \eqref{eq-ineq-x}, together with its principal specialization~\eqref{eq-ineq-1}, the character \(\chi_{D'}\) attains the upper bound if and only if $\chi_{D'}(1,\ldots,1)$ equals the number of diagrams $C'=(C'_1,\ldots,C'_n)$ satisfying $|C'_j|=|D'_j|$ for all $j\in [n]$.
        Since $D(j_0,j_0+1)$ is a Young diagram (as stated previously), by Theorem~\ref{them-Schur-Schur module}, the Schur character $\chi_{D(j_0,j_0+1)}$ is in fact a skew Schur polynomial. By the definition of skew Schur polynomials (illustrated in~\eqref{eq-def-Schur}), each summand in a skew Schur polynomial corresponds bijectively to a semistandard Young tableau. 
        To be clear, we refer the readers to the following identities:
        \begin{align}\label{bij-iden}
            \left|\{C'\colon |C_j'|=|D_j'| ~\text{for all $j$}\}\right|  &\xlongequal[\text{by (\ref{eq-ineq-x})}]{\text{if $D'$ attains the upper bound}}\chi_{D'}(1,\ldots,1) \notag\\
            &\xlongequal[\text{by Theorem \ref{them-Schur-Schur module}} ]
            {\text{if $D'=D(\lambda/\mu)$}}s_{\lambda/\mu}(1,\ldots,1)\notag\\
        &\xlongequal{\text{by definition}}\left|\{T\colon T\text{ is an SSYT of shape }\lambda/\mu \}\right| .
        \end{align}
  
        Since $D(j_0,j_0+1)$ is clearly a skew Young diagram (as stated previously), to show that $D(j_0,j_0+1)$ does not attain the upper bound, it suffices to show that the natural map from semistandard Young tableaux of shape $\lambda/\mu$ to the admissible diagrams $C(j_0,j_0+1)$ is injective but not surjective. 
        The map is naturally constructed by considering the elements filled in the column $j_0$ (resp. $j_0+1$) in $T$ as elements of the set $C_{j_0}$ (resp. $C_{j_0+1}$).
        For example, the Young tableau depicted in Figure~\ref{fig:exam-combination-filling} is mapped to $(C_1,C_2)=(\{3,5,6\},\{1,2,3,6\})$.
		
		Clearly the map is an injection. Next we show that this map is not surjective, which means that for a subdiagram $C(j_0,j_0+1)$, there is no semistandard Young tableau corresponding to such a diagram.  
        For $D(j_0,j_0+1)=(\left\{i_1,i_1+1,\ldots,i_{k_1}\right\},\left\{1,2,\ldots,i_{k_2}\right\})$, we can construct 
        $$C(j_0,j_0+1)=(\{n-i_{k_1}+i_1,n-i_{k_1}+i_1+1,\ldots,n\},\{1,2,\ldots,i_{k_2}\}).$$ 
        In any semistandard Young tableau of shape $D(j_0,j_0+1)$, the entries strictly increase down each column. The left column is $C_{j_0}=\{n-i_{k_1}+i_1,\dots,n\}$, whose smallest element $n-i_{k_1}+i_1$ must therefore occupy the top box $(i_1,j_0)$. The right column is $C_{j_0+1}=\{1,2,\dots,i_{k_2}\}$, which contains the entry $i_1$ and hence also places $i_1$ in some box of column $j_0+1$, in particular in row $i_1$ as $(i_1,j_0+1)$. Since $n-i_{k_1}+i_1>i_1$, the row-weakly-increasing condition of an SSYT is violated at row $i_1$. Thus no semistandard Young tableau $T$ satisfies $\phi(T)=C(j_0,j_0+1)$, so $\phi$ is not surjective. This completes the proof.

	\end{proof}
	
	  \begin{figure}[ht]
         \centering
		\begin{tikzpicture}
			\centering
			\draw[black] (0,-1.5) rectangle (0.5,-1);
            \draw[black] (0.5,-1.5) rectangle (1,-1);
            \draw[black] (0.5,-1) rectangle (1,-0.5);
            \filldraw[lightgray] (0,-1) rectangle (0.5,-0.5);
            \filldraw[lightgray] (0,-0.5) rectangle (0.5,0);
            \filldraw[lightgray] (0.5,-0.5) rectangle (1,0);
            \filldraw[lightgray] (0,0) rectangle (0.5,0.5);
            \filldraw[lightgray] (0.5,0) rectangle (1,0.5);
            \filldraw[lightgray] (0.5,0.5) rectangle (1,1);
            \filldraw[lightgray] (0.5,1) rectangle (1,1.5);
            \draw[black] (0,-1) rectangle (0.5,-0.5);
            \draw[black] (0,-0.5) rectangle (0.5,0);            
            \draw[black] (0.5,-0.5) rectangle (1,0);
			\draw[black] (0,0) rectangle (0.5,0.5);
            \draw[black] (0.5,0) rectangle (1,0.5);
            \draw[black] (0.5,0.5) rectangle (1,1);
            \draw[black] (0.5,1) rectangle (1,1.5);
            \draw[black] (0,0.5) rectangle (0.5,1);
            \draw[black] (0,1) rectangle (0.5,1.5);
            \draw[black] (0,-1.5) -- (0.5,-1);
            \draw[black] (0,-1) -- (0.5,-1.5);
            \draw[black] (0,0.5) -- (0.5,1);
            \draw[black] (0,1) -- (0.5,0.5);
            \draw[black] (0,1) -- (0.5,1.5);
            \draw[black] (0,1.5) -- (0.5,1);
            \draw[black] (0.5,-1.5) -- (1,-1);
            \draw[black] (0.5,-1) -- (1,-1.5);
            \draw[black] (0.5,-1) -- (1,-0.5);
            \draw[black] (0.5,-0.5) -- (1,-1);
            \node at (0.75,1.25) {1};
            \node at (0.75,0.75) {2};
            \node at (0.75,0.25) {3};
            \node at (0.75,-0.25) {6};
            \node at (0.25,0.25) {3};
            \node at (0.25,-0.25) {5};
            \node at (0.25,-0.75) {6};
		\end{tikzpicture}
         \caption{An instance of the filling in Figure~\ref{fig:exam-combination}.}
         \label{fig:exam-combination-filling}
     \end{figure}

Similarly, we can also obtain the pattern avoidance condition for the Stanley symmetric functions to reach their upper bounds by applying Theorem~\ref{thm-upper}.

\begin{cor}
    The Stanley symmetric function $F_w$ attains its upper bound if and only if $w$ avoids the patterns $312$ and $321$.
\end{cor}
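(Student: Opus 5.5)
The plan is to apply Theorem~\ref{thm-upper} to the Rothe diagram $D(w)$, for which $\chi_{D(w)}=F_w$, and to translate the forbidden configurations of Figure~\ref{fig-upper} into a condition on $w$. In both configurations the crossed cells are empty grids and the shaded cells are boxes. Up to the order of the two rows, both pictures show the same thing: two distinct columns $j_1,j_2$ and two distinct rows $i,i'$ such that
\[
(i,j_1),(i,j_2)\in D \qquad\text{and}\qquad (i',j_1),(i',j_2)\notin D .
\]
The first picture has the empty row on top and the second has it on the bottom. So $D$ contains one of the configurations exactly when some row has boxes in two columns and some other row is empty in both of those columns.

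\textbf{First step.} Reformulate pattern avoidance of $312$ and $321$. Both patterns have their largest entry in the first position. Hence $w$ avoids both if and only if no entry $w_a$ has two smaller entries to its right. Under the bijection between inversions and boxes, $(a,k)\mapsto(a,w_k)$, this says that every row of $D(w)$ contains at most one box.

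\textbf{Sufficiency.} If $w$ avoids $312$ and $321$, no row of $D(w)$ has two boxes. Then neither configuration of Figure~\ref{fig-upper} can occur, and Theorem~\ref{thm-upper} shows that $F_w$ attains the upper bound.

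\textbf{Necessity.} Suppose $w$ contains $312$ or $321$ at positions $a<b<c$ with $w_a>w_b$ and $w_a>w_c$.
\begin{itemize}
\item The inversion bijection gives boxes $(a,w_b)$ and $(a,w_c)$ of $D(w)$, and these lie in distinct columns since $w_b\neq w_c$.
\item Row $n$ of a Rothe diagram is always empty, because $(n,j)\in D(w)$ would require $n<w^{-1}(j)$. So $(n,w_b),(n,w_c)\notin D(w)$.
\item Since $a<b\le n$, we have $a\neq n$.
\end{itemize}
Thus rows $a,n$ and columns $w_b,w_c$ form the second configuration of Figure~\ref{fig-upper}: boxes in the upper row $a$, empty grids in the lower row $n$. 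By Theorem~\ref{thm-upper}, $F_w$ does not attain the upper bound.

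\textbf{Main obstacle.} The only delicate point is reading Figure~\ref{fig-upper} correctly: crossed cells mean empty, and the two pictures differ only in the relative order of the rows. The Rothe-diagram combinatorics itself is immediate, because row $n$ supplies an empty row for free.
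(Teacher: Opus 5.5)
Your proof is correct and follows the paper's argument. Both use Theorem~\ref{thm-upper} to reduce the question to every row of $D(w)$ containing at most one box, and then identify a row with two boxes with an occurrence of $312$ or $321$. The only difference is the empty row you pair with the two boxes: you use the always-empty last row $n$, while the paper uses row $\max\{w^{-1}(j_1),w^{-1}(j_2)\}$ for the two box columns $j_1,j_2$, and either choice works.
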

\begin{proof}
    By Theorem~\ref{thm-upper}, for an arbitrary diagram, equality with the upper bound in \ref{eq-ineq-x} is equivalent to avoiding the two forbidden $2\times2$ configurations consisting of two boxes in one row and two empty cells in another. For a Rothe diagram, this is equivalent to requiring that every row contain at most one box. 
    Indeed, suppose that row $i$ contains boxes in two distinct columns $a$ and $b$. Then $i<w^{-1}(a)$ and $i<w^{-1}(b)$. Setting $r=\max\{w^{-1}(a),w^{-1}(b)\}$, we have $(r,a),(r,b)\notin D(w)$. Thus these two columns form a forbidden configuration. The converse is immediate.
    
    Moreover, row $i$ contains at least two boxes if and only if there exist $i<j<k$ such that $w_i>w_j$ and $w_i>w_k$. If $w_j>w_k$, then $w_i,w_j,w_k$ form a $321$-pattern; otherwise, they form a $312$-pattern. Conversely, every occurrence of $312$ or $321$ yields two boxes in the row corresponding to its first entry. Therefore, every row of $D(w)$ contains at most one box if and only if $w$ avoids $312$ and $321$. This completes the proof.
\end{proof}

	\vskip 0.5cm
	\noindent \textbf{Acknowledgments.} Zhuowei Lin was supported by the Fundamental Research Funds for the Central Universities (No. 63263094) and the Natural Science Foundation of China (No. 12371329).

\bibliography{references}

@article{AS-1,
  author = {Assaf, S. and Searles, D.},
  title = {Kohnert tableaux and a lifting of quasi-Schur functions},
  journal = {J. Combin. Theory Ser. A},
  volume = {156},
  year = {2018},
  pages = {85--118}
}

@article{BH,
  author = {Br\"and\'en, P. and Huh, J.},
  title = {Lorentzian polynomials},
  journal = {Ann. of Math.},
  volume = {192},
  year = {2020},
  pages = {821--891}
}

@article{CFY,
  author = {Chen, Y. and Fan, N. J. Y. and Ye, Z.},
  title = {Zero-one Grothendieck polynomials},
  journal = {Sci. China Math.},
  volume = {69},
  number = {1},
  year = {2026},
  pages = {269--284}
}

@article{fan2022upper,
  author = {Fan, N. J. Y. and Guo, P. L.},
  title = {Upper bounds of Schubert polynomials},
  journal = {Sci. China Math.},
  volume = {65},
  year = {2022},
  pages = {1319--1330}
}

@article{2018Schubert,
  author = {Fink, A. and M\'esz\'aros, K. and St. Dizier, A.},
  title = {Schubert polynomials as integer point transforms of generalized permutahedra},
  journal = {Adv. Math.},
  volume = {332},
  year = {2018},
  pages = {465--475}
}

@article{fink2021zero,
  author = {Fink, A. and M\'esz\'aros, K. and St. Dizier, A.},
  title = {Zero-one Schubert polynomials},
  journal = {Math. Z.},
  volume = {297},
  year = {2021},
  pages = {1023--1042}
}

@article{fomin,
  author = {Fomin, S. and Greene, C. and Reiner, V. and Shimozono, M.},
  title = {Balanced labellings and Schubert polynomials},
  journal = {European J. Combin.},
  volume = {18},
  number = {4},
  year = {1997},
  pages = {373--389}
}

@book{Fulton,
  author = {Fulton, W. and Harris, J.},
  title = {Representation Theory},
  series = {Graduate Texts in Mathematics},
  volume = {129},
  publisher = {Springer-Verlag},
  address = {New York},
  year = {1991}
}

@article{GL,
  author = {Guo, P. L. and Lin, Z.},
  title = {Schubert polynomials and patterns in permutations},
  journal = {Math. Z.},
  volume = {311},
  year = {2026},
  pages = {Paper No. 16}
}

@article{GLP,
  author = {Guo, P. L. and Lin, Z. and Peng, S. C. Y.},
  title = {Zero-one dual characters of flagged Weyl modules},
  journal = {Canad. J. Math.},
  year = {2025},
  pages = {1--23},
  doi = {10.4153/S0008414X2510120X}
}

@article{HY-1,
  author = {Hodges, R. and Yong, A.},
  title = {Coxeter combinatorics and spherical Schubert geometry},
  journal = {J. Lie Theory},
  volume = {32},
  year = {2022},
  pages = {447--474}
}

@article{hodges2023multiplicity,
  author = {Hodges, R. and Yong, A.},
  title = {Multiplicity-free key polynomials},
  journal = {Ann. Comb.},
  volume = {27},
  year = {2023},
  pages = {387--411}
}

@article{Huh,
  author = {Huh, J. and Matherne, J. P. and M\'esz\'aros, K. and St. Dizier, A.},
  title = {Logarithmic concavity of Schur and related polynomials},
  journal = {Trans. Amer. Math. Soc.},
  volume = {375},
  year = {2022},
  pages = {4411--4427}
}

@article{Koh,
  author = {Kohnert, A.},
  title = {Weintrauben, Polynome, Tableaux},
  journal = {Bayreuth Math. Schrift.},
  volume = {38},
  year = {1990},
  pages = {1--97}
}

@article{LascouxSchutzenberger1985,
  author = {Lascoux, A. and Sch\"utzenberger, M.-P.},
  title = {Schubert polynomials and the Littlewood--Richardson rule},
  journal = {Lett. Math. Phys.},
  volume = {10},
  number = {2--3},
  year = {1985},
  pages = {111--124}
}

@phdthesis{Liu2010,
  author = {Liu, R. I.},
  title = {Specht modules and Schubert varieties for general diagrams},
  school = {Massachusetts Institute of Technology},
  address = {ProQuest LLC, Ann Arbor, MI},
  year = {2010}
}

@article{magyar1998schubert,
  author = {Magyar, P.},
  title = {Schubert polynomials and Bott--Samelson varieties},
  journal = {Comment. Math. Helv.},
  volume = {73},
  year = {1998},
  pages = {603--636}
}

@article{Magyar-1998-2,
  author = {Magyar, P.},
  title = {Borel-Weil theorem for configuration varieties and Schur modules},
  journal = {Adv. Math.},
  volume = {134},
  number = {2},
  year = {1998},
  pages = {328--366}
}

@article{mestrans,
  author = {M\'esz\'aros, K. and Setiabrata, L. and St. Dizier, A.},
  title = {An orthodontia formula for Grothendieck polynomials},
  journal = {Trans. Amer. Math. Soc.},
  volume = {375},
  number = {2},
  year = {2022},
  pages = {1281--1303}
}

@article{meszaros2021principal,
  author = {M\'esz\'aros, K. and St. Dizier, A. and Tanjaya, A.},
  title = {Principal specialization of dual characters of flagged Weyl modules},
  journal = {Electron. J. Combin.},
  volume = {28},
  year = {2021},
  pages = {Paper No. 4.17, 12 pp.}
}

@article{PengLinSun2024,
  author = {Peng, S. C. Y. and Lin, Z. and Sun, S. C. C.},
  title = {Upper bounds of dual flagged Weyl characters},
  journal = {Adv. Appl. Math.},
  volume = {160},
  year = {2024},
  pages = {102752},
  note = {12 pp.}
}

@book{2001The,
  author = {Sagan, B. E.},
  title = {The symmetric group. Representations, combinatorial algorithms, and symmetric functions},
  edition = {Second edition},
  series = {Graduate Texts in Mathematics},
  volume = {203},
  publisher = {Springer-Verlag},
  address = {New York},
  year = {2001}
}

@article{Stanley1984,
  author = {Stanley, R. P.},
  title = {On the number of reduced decompositions of elements of Coxeter groups},
  journal = {European J. Combin.},
  volume = {5},
  number = {4},
  year = {1984},
  pages = {359--372}
}

@misc{WYZZZ,
  author = {Wang, B. and Yang, A. L. B. and Zhang, C. X. T. and Zhang, P. B. and Zhang, Z.-X.},
  title = {Minkowski sum decomposition of Newton polytopes of skew Schur polynomials and beyond},
  note = {in preparation}
}

\end{document}